\documentclass[reqno,12pt]{amsart}
\usepackage{amsmath}
\usepackage{amssymb}
\usepackage{subfigure}
\usepackage{graphicx}
\usepackage{mathrsfs}
\usepackage{bbm}

\numberwithin{equation}{section}

\newcommand\NoBlackBoxes{\global\overfullrule0pt}
\NoBlackBoxes

\oddsidemargin0cm
\evensidemargin-0.2cm \topmargin-.5cm
\textheight22.5cm \textwidth15cm

 \parindent 0cm

\newtheorem{definition}{Definition}[section]
\newtheorem{theorem}[definition]{Theorem}
\newtheorem{lemma}[definition]{Lemma}

\newcommand{\N}{{\mathbb N}}
\newcommand{\R}{{\mathbb R}}

\newcommand{\E}{{\mathbb E}}
\renewcommand{\P}{{\mathbb P}}

\newcommand{\cov}{\mathrm{Cov}}

\newcommand{\X}{\textbf{X}}
\newcommand{\tr}{\mathrm{tr}}
\newcommand{\nc}{\mathcal{N}\mathcal{P}\mathcal{P}(k)}
\newcommand{\cc}{\mathcal{C}\mathcal{P}\mathcal{P}(k)}

\newcommand{\grab}{\bigskip \noindent}

\title[A phase transition in Random Matrix Theory]{A phase transition for the limiting spectral density of random matrices}

\author{Olga Friesen}
\address[Olga Friesen]{Westf\"alische Wilhelms-Universit\"at M\"unster,
Fachbereich Mathematik,
Einsteinstra\ss e 62, 48149 M\"unster, Germany}
\email[Olga Friesen]{olga.friesen@uni-muenster.de}

\author[Matthias L\"owe]{Matthias L\"owe}
\address[Matthias L\"owe]{Westf\"alische Wilhelms-Universit\"at M\"unster,
Fachbereich Mathematik,
Einsteinstra\ss e 62, 48149 M\"unster, Germany}
\email[Matthias L\"owe]{maloewe@math.uni-muenster.de}

\thanks{The research of the first author was supported by Deutsche Forschungsgemeinschaft via SFB 878 at University of M\"unster.}

\date{\today}

\subjclass[2010]{60B20, 60F15, 60K35}

\keywords{random matrices, dependent random variables, Toeplitz matrices, semicircle law, Curie-Weiss model}

\begin{document}

\begin{abstract}
We analyze the spectral distribution of symmetric random matrices
with correlated entries. While we assume that the diagonals of these random matrices are
stochastically independent, the elements of the diagonals are taken to be correlated.
Depending on the strength of correlation the limiting spectral distribution is either the famous semicircle
law or some other law, related to that derived for Toeplitz matrices by Bryc, Dembo and Jiang (2006).
\end{abstract}

\maketitle
\section{Introduction}
Historically, the theory of random matrices is fed by two sources. They 
were introduced in mathematical statistics by the seminal work of Wishart \cite{Wishart28}. 
On the other hand, Wigner used random matrices as a toy model for the energy levels and excitation spectra of heavy nuclei \cite{Wigner}. From these two roots random matrix theory has grown into an independent mathematical theory with applications in many areas of science. 

A central role in the study of random matrices with growing dimension is played by their eigenvalues. 
To introduce them let, for any $n\in\N$, $\left\{a_n(p,q), 1\leq p\leq q\leq n\right\}$ be a real valued random field.
Define the symmetric random $n\times n$ matrix $\X_n$ by
\begin{equation*}
 \X_n (q,p) = \X_n (p,q) = \frac{1}{\sqrt{n}} a_n(p,q), \qquad 1\leq p\leq q\leq n.
\end{equation*} 

We will denote the (real) eigenvalues of $ \textbf{X}_{n}$ by $\lambda_1^{(n)} \le \lambda_2^{(n)} \le \ldots \lambda_n^{(n)}$. Let $\mu_n$ be the empirical eigenvalue distribution, i.e.
\begin{equation*}
\mu_n = \frac{1}{n} \sum_{k=1}^n \delta_{\lambda_k^{(n)}}.
\end{equation*}

Wigner proved in his fundamental work \cite{Wigner} that, if the entries $a_n(p,q)$ are independent, normally distributed with mean 0, and have variance 1 for off-diagonal elements, and variance 2 on the diagonal, the empirical eigenvalue distribution $\mu_n$ converges weakly (in probability) to the so called semicircle distribution (or law), i.e. the probability distribution $\nu$ on $\mathbb{R}$ with density
$$
\nu(dx)= \frac 1{2\pi} \sqrt{4-x^2} \mathbbmss{1}_{|x|\le 2}.
$$

Quite some effort has been spent in investigating the universality of this result. Arnold \cite{Arnold} showed that
the convergence to the semicircle law is also true if one replaces the Gaussian distributed random variables by independent and identically distributed (i.i.d.) random variables with a finite fourth moment. Also the identical distribution may be replaced by some other assumptions (see e.g. \cite{erdoes_survey}). Recently, it was observed by Erd\"os et al. (\cite{ESY}) that the convergence of the spectral measure
towards the semicircle law holds in a local sense. More precisely, it can be proved that 
on intervals with width going to zero sufficiently slowly, the empirical eigenvalue distribution still converges to the semicircle distribution. 

This result therefore interpolates between the global and the local behavior of the eigenvalues in the bulk of the spectrum, which was rather recently proved to be universal as well in the so-called ''four-moment-theorem'' (\cite{taovu2}).
 
Other generalizations of Wigner's semicircle law concern
matrix ensembles with entries drawn according to weighted Haar measures on classical
(e.g., orthogonal, unitary, symplectic) groups. Such results are particularly
interesting since such random matrices also play a major role in non-commutative
probability (see e.g. \cite{alice_stflour}, or the very recommendable book Anderson, Guionnet, and Zeitouni \cite{agz}).

A slightly different approach to universality was taken in \cite{Schenker_Schulz-Baldes} and \cite{Diag}. In \cite{Diag} we study matrices with correlated entries. It is shown that, if the diagonals of $\X_n$ are independent and the correlation between elements along a diagonal decays sufficiently quickly, again the limiting spectral distribution is the semicircle law. 

Universality, however, does have its limitations. As was shown by Bryc et al. \cite{brycdembo} the limiting spectral distribution of large random Toeplitz or Hankel matrices is not the semicircle law. In fact, not much is known about the limiting measures, apart from their moments (which are the result of the proof by a moment method, a technique, that will also be employed by the present paper). 

The present note tries to explore the borderline between the weak correlations studied in \cite{Diag} and the strong correlations that lead to a limiting spectral distribution that is not of Wigner type. We will again assume that $\X_n$ has independent diagonals and we will see, which quantity determines whether the limiting measure of the empirical eigenvalue distribution is a semicircle law or not. A particularly nice example is borrowed from statistical mechanics. There the Curie-Weiss model is the easiest model of a ferromagnet. Here a magnetic substance has little atoms that carry a magnetic spin, that is either $+1$ or $-1$. These spins interact in cooperative way, the strength of the interaction being triggered by a parameter, the so-called inverse temperature. The model exhibits phase transition from paramagnetic to magnetic behavior (the standard reference for the Curie-Weiss model is \cite{Ellis}). We will see that this phase transition can be recovered on the level of the limiting spectral distribution of random matrices, if we fill their diagonals independently with the spins of Curie-Weiss models. For small interaction parameter, this limiting spectral distribution is the semicircle law, while for a large interaction parameter we obtain a distribution similar to the Toeplitz case.

The rest of this paper is organized as follows. Section 2 contains the technical assumptions we have to make together with the statement of our main result. Section 3 characterizes the various limiting distributions we obtain. Section 4 contains some interesting examples, while Section 5 is devoted to the proof of the main theorem.

\section{Main Result}

This section contains the general theorem that describes the various limiting spectral distributions for the matrices $\X_n$ introduced above. In order to be able to state the theorem we
will have to impose the following conditions on $\X_n$: \\

\begin{enumerate}
 \item[(C1)] $\mathbb{E}\left[a_n(p,q)\right]=0$, $\mathbb{E}\left[a_n(p,q)^{2}\right] = 1$ and
\begin{equation}
 m_k:=\sup_{n\in\mathbb{N}} \max_{1\leq p\leq q\leq n} \mathbb{E}\left[\left|a_n(p,q)\right|^{k}\right] < \infty, \quad k\in\mathbb{N}.
 \label{mok}
\end{equation}
 \item[(C2)] the diagonals of $\X_n$, i.e. the families $\left\{a_n(p,p+r), 1\leq p\leq n-r\right\}$, $0\leq r\leq n-1$, are independent,
 \item[(C3)] the covariance of two entries on the same diagonal depends only on $n$, i.e. for any $0\leq r\leq n-1$ and $1\leq p,q\leq n-r$, $p\neq q$, we can define
	\begin{equation*}
       	\cov(a_n(p,p+r),a_n(q,q+r)) =: c_n,
	\end{equation*}
 \item[(C4)] the limit $c:=\lim_{n\to\infty} c_n$ exists.
\end{enumerate}

\grab

With these notations and conditions we are able to formulate the
central result of this note.

\begin{theorem}
Assume that the symmetric random matrix $\X_n$ as defined above satisfies the conditions (C1), (C2), (C3) and (C4). Then, with probability $1$, the empirical spectral distribution $\mu_n$ of $\X_n$ converges weakly to a nonrandom probability distribution $\nu_c$ which does not depend on the distribution of the entries of $\X_n$.
\label{main}
\end{theorem}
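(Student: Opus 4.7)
The plan is to proceed via the classical method of moments. For each integer $k\ge 1$ I would show that $\E\!\left[\frac{1}{n}\tr \X_n^k\right]$ converges to a limit $\beta_k(c)$ depending only on $c$, that the sequence $(\beta_k(c))_{k\ge 1}$ satisfies Carleman's condition (this is automatic from the uniform bound $m_k<\infty$ in (C1)), so it is the moment sequence of a unique probability measure $\nu_c$. Combined with an $O(n^{-2})$ variance bound for $\frac{1}{n}\tr \X_n^k$ and Borel--Cantelli, this upgrades convergence in expectation to the desired almost-sure weak convergence.

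Expanding the trace yields
\begin{equation*}
\frac{1}{n}\E[\tr \X_n^k]
= \frac{1}{n^{1+k/2}}\sum_{p_1,\ldots,p_k=1}^n
\E\!\left[a_n(p_1,p_2)\,a_n(p_2,p_3)\cdots a_n(p_k,p_1)\right],
\end{equation*}
with indices taken cyclically. Each factor $a_n(p_i,p_{i+1})$ sits on the diagonal labelled by $r_i=|p_{i+1}-p_i|$, and by (C2) the expectation factorises over groups of edges sharing a diagonal. To each closed walk I would associate two partitions of $\{1,\ldots,k\}$: the \emph{diagonal partition} $\pi_{\mathrm d}$ identifying edges on the same diagonal, and the finer \emph{edge partition} $\pi_{\mathrm e}$ identifying edges traversing the same unordered pair of vertices. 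Within a $\pi_{\mathrm d}$-block, (C3) says that any two distinct $\pi_{\mathrm e}$-sub-blocks contribute a covariance $c_n$, while edges inside one $\pi_{\mathrm e}$-sub-block contribute a higher moment. A $\pi_{\mathrm d}$-block that is a singleton produces the factor $0$ by mean-zero in (C1), so only pair partitions on the level of $\pi_{\mathrm d}$ (or coarser) survive.

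The next step is to count, for each admissible $(\pi_{\mathrm d},\pi_{\mathrm e})$, the asymptotic order of the number of index tuples compatible with it. As in the standard Wigner analysis, the number of free indices is governed by the genus of the underlying graph, and only the genus-zero configurations give the maximal order $n^{1+k/2}$. When $\pi_{\mathrm e}$ equals the non-crossing pair partition refinement of $\pi_{\mathrm d}$ the contribution is precisely the Catalan term (independent of $c$); when $\pi_{\mathrm d}$ is strictly coarser than $\pi_{\mathrm e}$, each extra merge costs a factor $c_n$ but may free an additional summation index, yielding a surviving polynomial in $c$. All configurations involving genuinely higher moments than the second get a combinatorial factor of order $o(n^{1+k/2})$ by a counting argument using the bound $m_k<\infty$, so universality follows: the limit depends on the entries only through the variance $1$ and the covariance $c$. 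For the variance estimate I would expand $\frac{1}{n^2}\E[(\tr \X_n^k)^2]$ into pairs of closed walks; by (C2) any pair whose diagonal multisets differ decouples, and the remaining pairs lose at least one free index compared with the product $\left(\E\tfrac{1}{n}\tr \X_n^k\right)^2$, giving the required $O(n^{-2})$ bound.

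The main obstacle is the combinatorial bookkeeping of the pair $(\pi_{\mathrm d},\pi_{\mathrm e})$ and the identification of exactly which configurations realise the leading order $n^{1+k/2}$ when $c\neq 0$. At $c=0$ the situation collapses to the usual non-crossing pair partitions and $\nu_0$ is the semicircle law; as $c$ grows, coarser diagonal partitions activate and one obtains, in the limit $c=1$, precisely the moments of the Bryc--Dembo--Jiang Toeplitz law. Verifying these two endpoints and proving monotone interpolation between them requires a careful Euler-characteristic computation that I expect to be the technically most demanding part of the argument.
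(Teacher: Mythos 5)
Your overall strategy (moments, classification of closed walks by the partition induced by the diagonals, singleton blocks killed by centering, a concentration bound plus Borel--Cantelli) is the same as the paper's, but the proposal has three concrete gaps. First, the assertion that ``only the genus-zero configurations give the maximal order $n^{1+k/2}$'' is false in this model and would lead you to the wrong limit. Because two matched edges need only lie on the same \emph{diagonal} rather than occupy the same matrix entry, every pair partition $\pi$ of the diagonal structure --- crossing or not --- contributes $\#S_n^*(\pi)\sim p_T(\pi)\,n^{k/2+1}$ index tuples, where $p_T(\pi)$ is the Toeplitz volume of Bryc--Dembo--Jiang; the crossing partitions are precisely the source of the non-semicircle part of $\nu_c$, and they survive at leading order whenever $c\neq 0$. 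Second, you leave the actual computation of the limit unperformed: the paper's key combinatorial input is that for a fixed diagonal pair partition $\pi$, all but $o(n^{k/2+1})$ of the tuples in $S_n^*(\pi)$ have exactly $h(\pi)$ blocks in which the two edges coincide as entries (contributing variance $1$) and $k/2-h(\pi)$ blocks contributing the covariance $c_n$, where $h(\pi)$ is the height of Definition~\ref{height}. This is the content of Lemmas~\ref{crenc} and~\ref{hpi} and yields the moment formula \eqref{nu_c}; your ``each extra merge costs a factor $c_n$ but may free an additional summation index'' does not identify this exponent, and you explicitly defer it as ``the technically most demanding part,'' so the limiting moments $\beta_k(c)$ are never determined.

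Third, the concentration step does not close. By your own counting, forcing the two closed walks to share a diagonal removes \emph{one} free index out of roughly $k+2$, so the bound you can extract is $\mathrm{Var}\bigl(\tfrac1n\tr\X_n^k\bigr)=O(n^{-1})$, not $O(n^{-2})$ (already for $k=2$ there are $\sim n^3$ correlated pairs of entries on common diagonals, giving variance of order $n^{-1}$ under (C1)--(C3) alone). Chebyshev then gives tail probabilities of order $n^{-1}$, which are not summable, so Borel--Cantelli fails. The paper instead uses a \emph{fourth} central moment bound, $\E[(\tr\X_n^k-\E\tr\X_n^k)^4]\le Cn^2$ (Lemma~\ref{lemma}), which gives summable tails of order $n^{-2}$. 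A smaller point: Carleman's condition is not ``automatic from $m_k<\infty$''; it requires controlling the growth in $k$ of the limiting moments, which here follows from the bound $\sum_{\pi\in\mathcal{PP}(k)}p_T(\pi)|c|^{k/2-h(\pi)}\le (k-1)!!$ using $p_T(\pi)\le1$ and $|c|\le1$, not from the finiteness of each individual $m_k$.
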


\section{The Limiting Distribution $\nu_c$}
\label{measure}

Since the proof of Theorem \ref{main} relies on the so-called moment-method, we want to describe the limiting spectral distribution $\nu_c$ in terms of its moments. It is not surprising that $\nu_c$ is some combination of the semicircle distribution and the limiting distribution of Toeplitz matrices as described in \cite{brycdembo}. Indeed, $c=0$ covers the case of independent entries implying that $\nu_0$ is the semicircle law. On the other hand, considering symmetric Toeplitz matrices, we have $c=1$, and thus $\nu_1$ is the corresponding limiting distribution we want to introduce in the following (cf. \cite{brycdembo}). Therefore, we have to start with some notation. For any even $k\in\N$, let $\mathcal{PP}(k)$ denote the set of all \emph{pair partitions} $\pi$ of $\left\{1,\ldots,k\right\}$. If $i$ and $j$ are in the same block of $\pi$, we also write $i\sim_\pi j$. The measure $\nu_1$ can be defined with the help of \emph{Toeplitz volumes}. Thus, we associate to any partition $\pi\in\mathcal{PP}(k)$ the following system of equations in unknowns $x_0,\ldots,x_k$:
\begin{equation}
\begin{split}
x_{1}-x_{0}+x_{l_1}-x_{l_1-1} &= 0, \quad \text{if} \ 1\sim_{\pi} l_1, \\
x_{2}-x_{1}+x_{l_2}-x_{l_2-1} &= 0, \quad \text{if} \ 2\sim_{\pi} l_2, \\
& \vdots \\
x_{i}-x_{i-1}+x_{l_i}-x_{l_i-1} &= 0, \quad \text{if} \ i\sim_{\pi} l_i, \\
& \vdots \\
x_{k}-x_{k-1}+x_{l_{k}}-x_{l_{k}-1} &= 0, \quad \text{if} \ k\sim_{\pi} l_k.
\end{split}
\label{eqSystem}
\end{equation}

Since $\pi$ is a pair partition, we in fact have only $k/2$ equations although we have listed $k$. However, we have $k+1$ variables. If $\pi=\{\{i_1,j_1\},\ldots,\{i_{k/2},j_{k/2}\}\}$ with $i_l<j_l$ for any $l=1,\ldots,{k/2}$, we solve \eqref{eqSystem} for $x_{j_1},\ldots,x_{j_{k/2}}$, and leave the remaining variables undetermined. We further impose the condition that all variables $x_0,\ldots,x_k$ lie in the interval $I=[0,1]$. Solving the equations above in this way determines a cross section of the cube $I^{k/2+1}$. The volume of this will be denoted by $p_T(\pi)$.

Returning to the measure $\nu_1$, we can use the results in \cite{brycdembo} to see that all odd moments of $\nu_1$ are zero, and for any even $k\in\N$, the $k$-th moment is given by
\begin{equation*}
\int x^k d\nu_1(x) = \sum_{\pi\in\mathcal{PP}(k)} p_T(\pi).
\end{equation*}

The expression above is bounded by $(k-1)!!$. Hence, Carleman's condition is satisfied implying that the distribution $\nu_1$ is uniquely determined by its moments. Moreover, it has an unbounded support as verified in \cite{brycdembo}. To describe $\nu_c$ for general $c\in\R$, we need a further definition which was introduced in \cite{brycdembo} to analyze Markov matrices.

\begin{definition}
Let $k\in\N$ be even, and fix $\pi\in\mathcal{PP}(k)$. The \emph{height} $h(\pi)$ of $\pi$ is the number of elements $i\sim_\pi j$, $i<j$, such that either $j=i+1$ or the restriction of $\pi$ to $\{i+1,\ldots,j-1\}$ is a pair partition.
\label{height}
\end{definition}

Note that the property that the restriction of $\pi$ to $\{i+1,\ldots,j-1\}$ is a pair partition in particular requires that the distance $j-i-1\geq 1$ is even. To give an example how to calculate the height of a partition, take $\pi=\{\{1,6\},\{2,4\},\{3,5\}\}$. Considering the block $\{1,6\}$, we see that the restriction of $\pi$ to $\{2,3,4,5\}$ is a pair partition, namely $\{\{2,4\},\{3,5\}\}$. However, this is not true for both remaining blocks. Hence, $h(\pi)=1$.

In Section~\ref{proof}, we will see that all odd moments of $\nu_c$ vanish, and the even moments are given by
\begin{equation}
\int x^k d\nu_c(x) = C_{\frac{k}{2}} + \sum_{\pi\in\cc} p_T(\pi) c^{\frac{k}{2}-h(\pi)} = \sum_{\pi\in\mathcal{PP}(k)} p_T(\pi) c^{\frac{k}{2}-h(\pi)},
\label{nu_c}
\end{equation}

where $C_k = \frac{(2k)!}{k!\left(k+1\right)!}$ denotes the $k$-th Catalan number, and $\cc$ is the set of crossing pair partitions of $\{1,\ldots,k\}$. Here, we say that a pair partition $\pi$ is \emph{crossing} if there are indices $i<j<l<m$ with $i\sim_\pi l$ and $j\sim_\pi m$. Otherwise, we call $\pi$ \emph{non-crossing}. We will denote the set of all non-crossing pair partitions of $\{1,\ldots,k\}$ by $\nc$. Note that the number of elements in $\nc$ coincides with the Catalan number $C_{k/2}$. The latter is exactly the $k$-th moment of the semicircle distribution. As for the limiting distribution in the Toeplitz case, we can verify the Carleman condition to see that $\nu_c$ is uniquely determined by its moments.

\section{Examples}

In this section, we want to give some examples of processes satisfying the assumptions of Theorem~\ref{main}.

\subsection{Toeplitz Matrices} Consider a symmetric Toeplitz matrix. The limiting spectral distribution calculated in \cite{brycdembo} can be deduced from Theorem~\ref{main} as well. Indeed, assuming that the entries are centered with unit variance and have existing moments of any order, we see that all conditions are satisfied with $c=c_n=1$. Thus, we get
\begin{equation*}
\int x^k d\nu_1(x) = \left\{
\begin{aligned}
& C_{\frac{k}{2}} + \sum_{\pi\in\cc} p_T(\pi) = \sum_{\pi\in\mathcal{PP}(k)} p_T(\pi), && \text{if} \ k \ \text{is even}, \\
& 0, && \text{if} \ k \ \text{is odd}.
\end{aligned} \right.
\end{equation*}

\subsection{Exchangeable Random Variables} Suppose that for any $n\in\N$, we have a family $\left\{x_n(p), 1\leq p\leq n\right\}$ of exchangeable random variables, i.e. the distribution of the vector $(x_n(1),\ldots,x_n(n))$ is the same as that of $(x_n(\sigma(1)),\ldots,x_n(\sigma(n)))$ for any permutation $\sigma$ of $\{1,\ldots,n\}$. In this case, we can conclude that for any $1\leq p<q\leq n$, we have
\begin{equation*}
\cov(x_n(p),x_n(q))=\cov(x_n(1),x_n(2))=:c_n.
\end{equation*}

Now assume that $c_n\to c\in\R$ as $n\to\infty$. Define for any $n\in\N$, $r\in\{0,\ldots,n-1\}$, the process $\{a_n(p,p+r), 1\leq p\leq n-r\}$ to be an independent copy of $\{x_n(p), 1\leq p\leq n-r\}$. Then, all conditions of Theorem~\ref{main} are satisfied if we ensure that the moment condition (C1) holds. The resulting limiting distribution for different choices of $c$ is depicted in Figure~\ref{exchange}.

\begin{figure}[ht]
  \centering
  \subfigure[$c=0.25$]{\includegraphics[width=5cm]{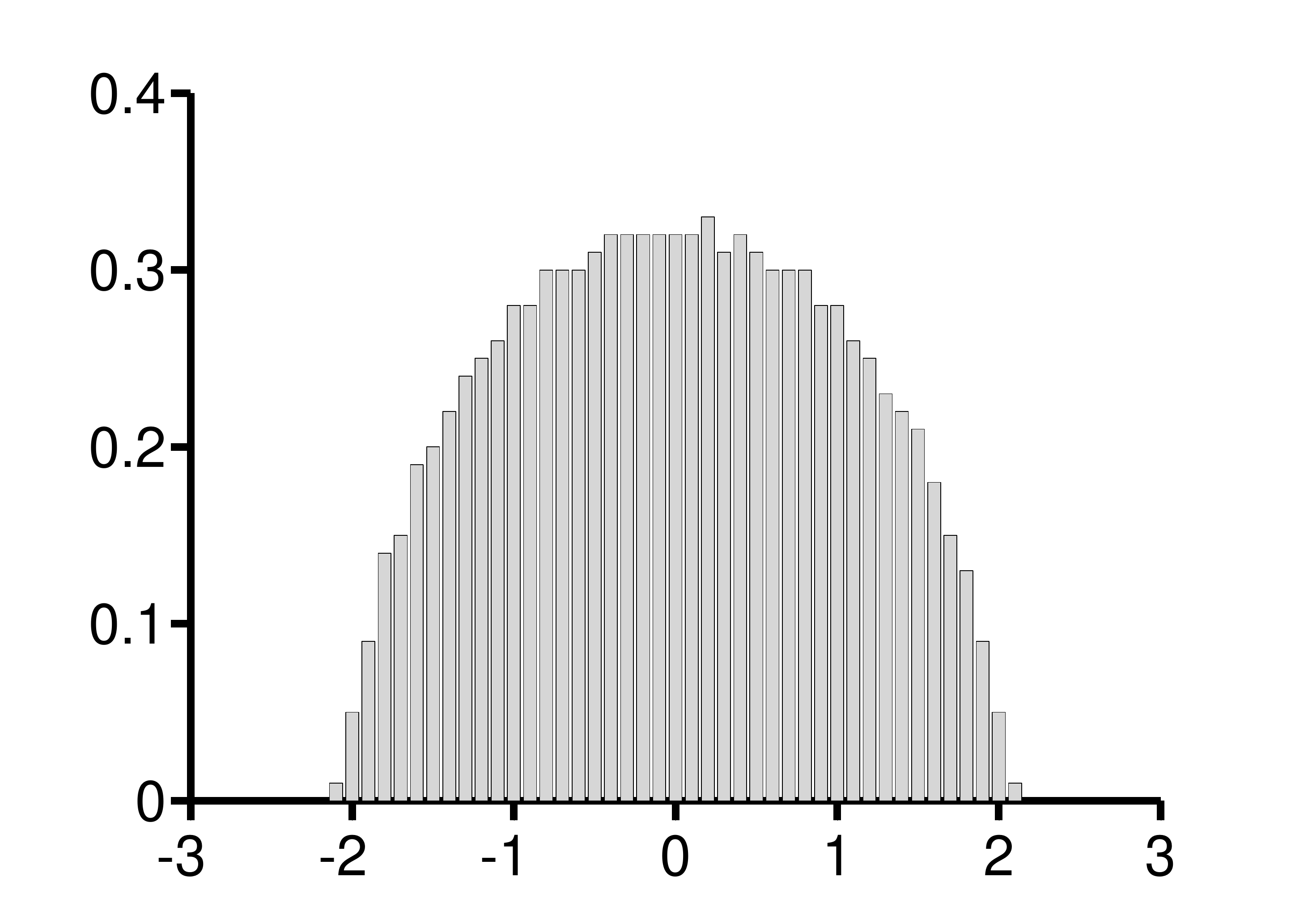}}
	\subfigure[$c=0.5$]{\includegraphics[width=5cm]{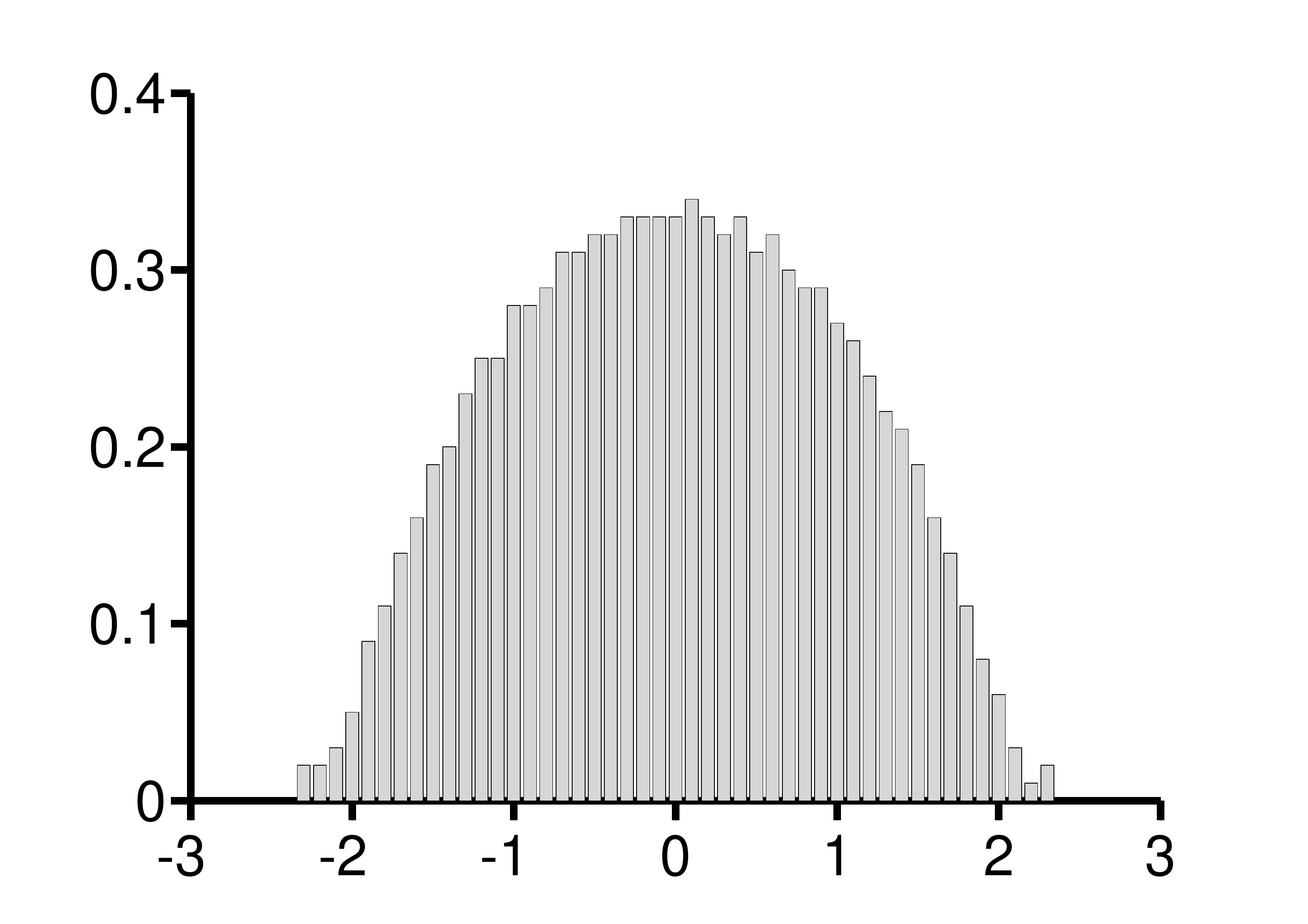}}
	\subfigure[$c=0.75$]{\includegraphics[width=5cm]{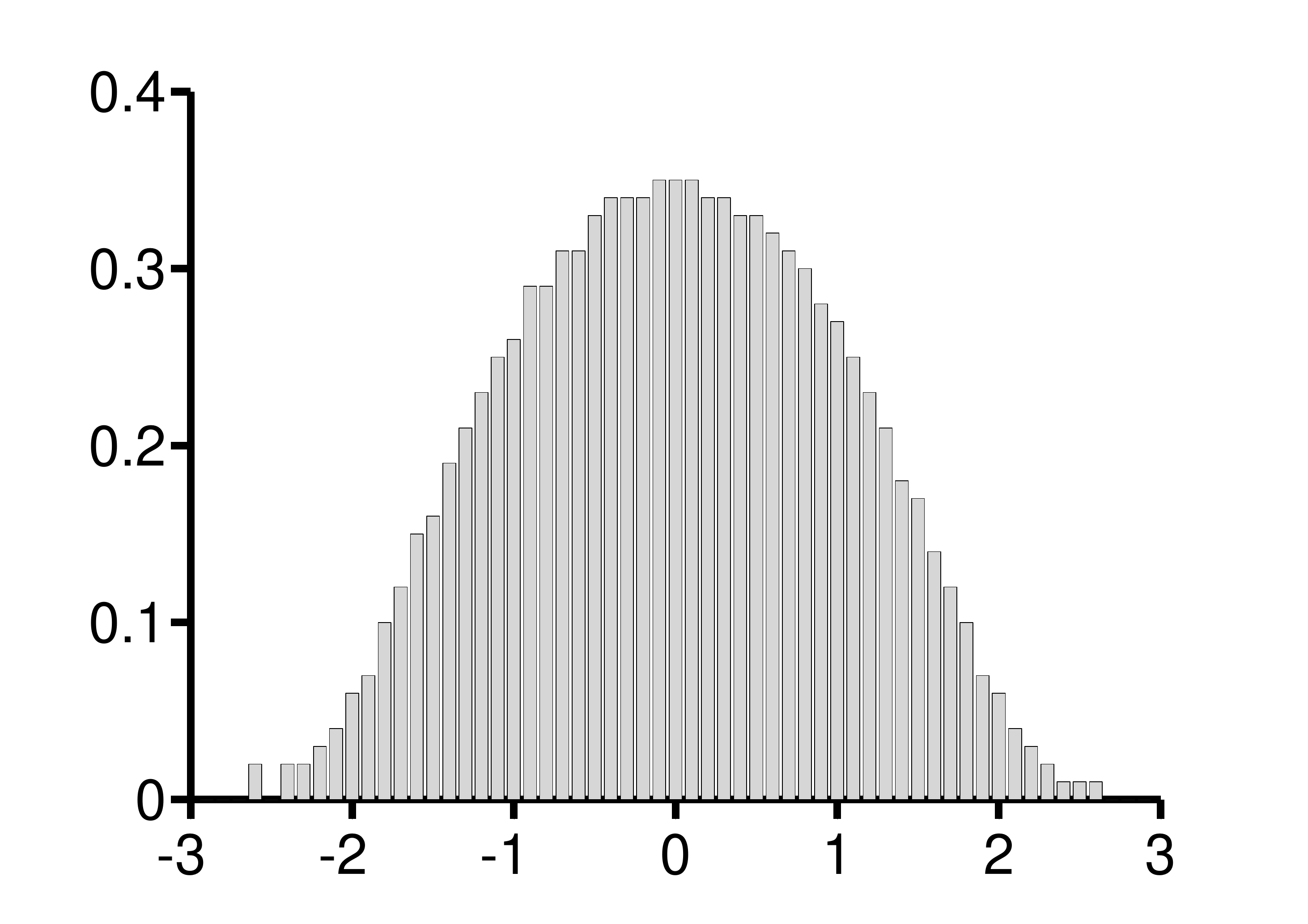}}
  \caption{Histograms of the empirical spectral distribution of $100$ realizations of $1000\times 1000$ matrices $\X_{1000}$ with standard Gaussian entries.}
  \label{exchange}
\end{figure}

An example for a process with exchangeable variables is the Curie-Weiss model with inverse temperature $\beta>0$. Here, the vector $x_n=(x_n(1),\ldots,x_n(n))$ takes values in $\{-1,1\}^n$, and for any $\omega=(\omega(1),\ldots,\omega(n))\in\{-1,1\}^n$, we have
\begin{equation*}
\P(x_n=\omega)
= \frac{1}{Z_{n,\beta}} \exp\left(\frac{\beta}{2n} \left(\sum_{i=1}^n \omega(i)\right)^2\right),
\end{equation*}

where $Z_{n,\beta}$ is the normalizing constant. Since $\P(x_n(1)=-1)=\P(x_n(1)=1)=\frac{1}{2}$, we obtain $\E[x_n(1)]=0$. Further, we clearly have $\E[x_n(1)^2]=1$. It remains to determine $c=\lim_{n\to\infty} c_n$. Therefore, we want to make use of the identity
\begin{equation*}
c_n = \cov(x_n(1),x_n(2)) = \E[x_n(1)x_n(2)] = \frac{n}{n-1} \E[m_n^2] - \frac{1}{n-1},
\end{equation*}

where $m_n:=\frac{1}{n}\sum_{i=1}^n x_n(i)$ is the so-called magnetization of the system. Since $|m_n|\leq 1$, we see that $m_n$ is uniformly integrable. Thus, $m_n$ converges in $\mathscr{L}^2$ to some random variable $m$ if and only if $m_n\to m$ in probability. In \cite{ellis2}, it was verified that $m_n\to 0$ in probability if $\beta\leq 1$, and $m_n\to m$ with $m\sim \frac{1}{2}\delta_{m(\beta)} + \frac{1}{2}\delta_{-m(\beta)}$ for some $m(\beta)>0$ if $\beta>1$. The function $m(\beta)$ is monotonically increasing on $(1,\infty)$, and satisfies $m(\beta)\to 0$ as $\beta \searrow 1$ and $m(\beta)\to 1$ as $\beta\to\infty$. We now obtain
\begin{equation*}
c = \lim_{n\to\infty} c_n = \left\{
\begin{aligned}
&0, && \text{if} \ \beta\leq 1, \\
&m(\beta)^2, && \text{if} \ \beta>1.
\end{aligned} \right.
\end{equation*}

Thus, the limiting spectral distribution of $\X_n$ is the semicircle law if $\beta\leq 1$, and approximately the Toeplitz limit if $\beta$ is large. This is insofar not surprising as the different sites in the Curie-Weiss model show little interaction, i.e. behave almost independently, if the temperature is high, or, in other words, $\beta$ is small. However, if the temperature is low, i.e. $\beta$ is large, the magnetization of the sites strongly depends on each other. The phase transition at the critical inverse temperature $\beta=1$ in the Curie-Weiss model is thus reflected in the limiting spectral distribution of $\X_n$ as well.

\section{Proof of Theorem \ref{main}}
\label{proof}

The main technique we want to apply is the method of moments. The idea is to first determine the weak limit of the expected empirical spectral distribution. Therefore, the similar structure of the matrices under consideration allows us to repeat some concepts presented in \cite{Diag}. However, we need to develop new ideas when calculating the expectations of the entries.

\subsection{The expected empirical spectral distribution}

To determine the limit of the $k$-th moment of the expected empirical spectral distribution $\mu_n$ of $\X_n$, we write
\begin{align*}
\E\left[\int x^k d\mu_n(x)\right]
&= \frac{1}{n} \E\left[\tr\left(\X_n^k\right)\right] \\
&=  \frac{1}{n^{\frac{k}{2}+1}} \sum_{p_1,\ldots,p_k=1}^n \E\left[a(p_1,p_2) a(p_2,p_3)\cdots a(p_{k-1},p_k) a(p_k,p_1)\right].
\end{align*}

The main task is now to compute the expectations on the right hand side. However, we have to face the problem that some of the entries involved are independent and some are not. To be more precise, $a(p_1,q_1),\ldots,a(p_j,q_j)$ are independent whenever they can be found on different diagonals of $\X_n$, i.e. the distances $|p_1-q_1|,\ldots,|p_j-q_j|$ are distinct. Hence, a first step in our proof is to consider the expectation $\E\left[a(p_1,p_2) a(p_2,p_3)\cdots a(p_{k-1},p_k) a(p_k,p_1)\right]$, and to identify entries with the same distance of their indices. Therefore, we want to adapt some concepts of \cite{Schenker_Schulz-Baldes} and \cite{brycdembo} to our situation.

To start with, fix $k\in\N$, and define $\mathcal{T}_n(k)$ to be the set of $k$-tuples of \emph{consistent pairs}, that is multi-indices $\left(P_1,\ldots,P_k\right)$ satisfying for any $j=1,\ldots,k$,

\begin{enumerate}
	\item $P_j = (p_j,q_j) \in \left\{1,\ldots,n\right\}^2$,
	\item $q_j = p_{j+1}$, where $k+1$ is cyclically identified with $1$.
\end{enumerate}

With this notation, we find that
\begin{equation*}
 \frac{1}{n} \E\left[\tr\left(\X_{n}^{k}\right)\right] = \frac{1}{n^{\frac{k}{2}+1}} \sum_{\left(P_1,\ldots,P_k\right)\in\mathcal{T}_n(k)} \E\left[a_n(P_1)\cdots a_n(P_k)\right].
\end{equation*}

To reflect the dependency structure among the entries $a_n(P_1)\ldots a_n(P_k)$, we want to make use of the set $\mathcal{P}(k)$ of partitions of $\{1,\ldots,k\}$. Thus, take $\pi\in\mathcal{P}(k)$. We say that an element $\left(P_1,\ldots,P_k\right)\in\mathcal{T}_n(k)$ is a \emph{$\pi$-consistent sequence} if
\begin{equation*}
 \left|p_i - q_i\right| = \left|p_j - q_j\right| \quad \Longleftrightarrow \quad i\sim_{\pi} j.
\end{equation*}

According to condition (C2), this implies that $a_n(P_{i_1}),\ldots,a_n(P_{i_l})$ are stochastically independent if $i_1,\ldots,i_l$ belong to $l$ different blocks of $\pi$. The set of all $\pi$-consistent sequences $\left(P_1,\ldots,P_k\right)\in\mathcal{T}_n(k)$ is denoted by $S_n(\pi)$. Note that the sets $S_n(\pi)$, $\pi\in\mathcal{P}(k)$, are pairwise disjoint, and $\bigcup_{\pi\in\mathcal{P}(k)} S_n(\pi)=\mathcal{T}_n(k)$. Consequently, we can write
\begin{equation}
 \frac{1}{n} \E\left[\tr\left(\X_{n}^{k}\right)\right] = \frac{1}{n^{\frac{k}{2}+1}} \sum_{\pi \in \mathcal{P}(k)} \sum_{\left(P_1,\ldots,P_k\right)\in S_n(\pi)} \E\left[a_n(P_1)\cdots a_n(P_k)\right].
\label{sum}
\end{equation}

In a next step, we want to exclude partitions that do not contribute to \eqref{sum} as $n\to\infty$. These are those partitions satisfying either $\#\pi > \frac{k}{2}$ or $\# \pi < \frac{k}{2}$, where $\#\pi$ denotes the number of blocks of $\pi$. We want to treat the two cases separately. \\

\emph{First case: $\#\pi > \frac{k}{2}$.} Since $\pi$ is a partition of $\left\{1,\ldots,k\right\}$, there is at least one singleton, i.e. a block containing only one element $i$. Consequently, $a_n(P_i)$ is independent of $\{a_n(P_j), j\neq i\}$ if $\left(P_1,\ldots,P_k\right)\in S_n(\pi)$. Since we assumed the entries to be centered, we obtain
\begin{equation*}
 \E\left[a_n(P_1)\cdots a_n(P_k)\right] = \E\Big[\prod_{i\neq l}a_n(P_i)\Big] \E\left[a_n(P_l)\right] = 0.
\end{equation*}

This yields
\begin{equation*}
 \frac{1}{n^{\frac{k}{2}+1}} \sum_{\left(P_1,\ldots,P_k\right)\in S_n(\pi)} \E\left[a_n(P_1)\cdots a_n(P_k)\right] = 0.
\end{equation*}

\emph{Second case: $r:= \#\pi < \frac{k}{2}$.} Here, we want to argue that $\pi$ gives vanishing contribution to \eqref{sum} as $n\to\infty$ by calculating $\#S_n(\pi)$. To fix an element $(P_1,\ldots,P_k)\in S_n(\pi)$, we first choose the pair $P_1 = (p_1,q_1)$. There are at most $n$ possibilities to assign a value to $p_1$, and another $n$ possibilities for $q_1$. To fix $P_2 = (p_2,q_2)$, note that the consistency of the pairs implies $p_2 = q_1$. If now $1\sim_\pi 2$, the condition $\left|p_1 - q_1\right| = \left|p_2 - q_2\right|$ allows at most two choices for $q_2$. Otherwise, if $1\not\sim_\pi 2$, we have at most $n$ possibilities. We now proceed sequentially to determine the remaining pairs. When arriving at some index $i$, we check whether $i$ is in the same block as some preceding index $1,\ldots,i-1$. If this is the case, then we have at most two choices for $P_i$ and otherwise, we have $n$. Since there are exactly $r=\#\pi$ different blocks, we can conclude that
\begin{equation}
 \# S_n(\pi) \leq n^2 n^{r-1} 2^{k-r} \leq C \ n^{r+1}
\label{secondSn}
\end{equation}
with a constant $C=C(r,k)$ depending on $r$ and $k$.

Now the uniform boundedness of the moments \eqref{mok} and the H\"{o}lder inequality together imply that for any sequence $(P_1,\ldots,P_k)$,
\begin{equation}
 \left|\E\left[a_n(P_1)\cdots a_n(P_k)\right] \right| \leq \left[\E\left|a_n(P_1)\right|^{k}\right]^{\frac{1}{k}} \cdots \left[\E\left|a_n(P_k)\right|^{k}\right]^{\frac{1}{k}} \leq m_k.
\label{holder}
\end{equation}

Consequently, taking account of the relation $r< \frac{k}{2}$, we get
\begin{align*}
 \frac{1}{n^{\frac{k}{2}+1}} \sum_{\left(P_1,\ldots,P_k\right)\in S_n(\pi)} \left|\E\left[a_n(P_1)\cdots a_n(P_k)\right] \right|
\leq C \ \frac{\# S_n(\pi)}{n^{\frac{k}{2}+1}}
\leq C \ \frac{1}{n^{\frac{k}{2}-r}} = o(1).
\end{align*}

Combining the calculations in the first and the second case, we can conclude that
\begin{equation*}
\frac{1}{n} \E\left[\tr\left(\X_{n}^{k}\right)\right]
= \frac{1}{n^{\frac{k}{2}+1}} \sum_{\substack{\pi \in \mathcal{P}(k), \\ \# \pi = \frac{k}{2}}} \sum_{\left(P_1,\ldots,P_k\right)\in S_n(\pi)} \E\left[a_n(P_1)\cdots a_n(P_k)\right] + o(1).
\end{equation*}

Now assume that $k$ is \emph{odd}. Then the condition $\#\pi = \frac{k}{2}$ cannot be satisfied, and the considerations above immediately yield
\begin{equation*}
 \lim_{n\to\infty} \frac{1}{n} \E\left[\tr\left(\X_{n}^{k}\right)\right] = 0.
\end{equation*}

It remains to determine the even moments. Thus, let $k\in\N$ be \emph{even}. Recall that we denoted by $\mathcal{PP}(k)\subset \mathcal{P}(k)$
the set of all pair partitions of $\{1,\ldots,k\}$. In particular, $\#\pi = \frac{k}{2}$ for any $\pi\in\mathcal{PP}(k)$.
On the other hand, if $\#\pi = \frac{k}{2}$ but $\pi \notin \mathcal{PP}(k)$, we can conclude that $\pi$ has at least one singleton and hence, as in the first case above, the expectation corresponding to the $\pi$-consistent sequences will become zero. Consequently,
\begin{equation}
 \frac{1}{n} \E\left[\tr\left(\X_{n}^{k}\right)\right] = \frac{1}{n^{\frac{k}{2}+1}} \sum_{\pi\in\mathcal{PP}(k)} \sum_{\left(P_1,\ldots,P_k\right)\in S_n(\pi)} \E\left[a_n(P_1)\cdots a_n(P_k)\right] + o(1).
 \label{sum2}
\end{equation}

We have now reduced the original set $\mathcal{P}(k)$ to the subset $\mathcal{P}\mathcal{P}(k)$. Next we want to fix a $\pi\in\mathcal{P}\mathcal{P}(k)$ and concentrate on the set $S_n(\pi)$. The following lemma will help us to calculate that part of \eqref{sum2} which involves non-crossing partitions.

\begin{lemma}[cf. \cite{brycdembo}, Proposition 4.4.]
 Let $S_{n}^{*}(\pi) \subseteq S_n(\pi)$ denote the set of $\pi$-consistent sequences $(P_1,\ldots,P_k)$ satisfying
\begin{equation*}
 i\sim_\pi j \quad \Longrightarrow \quad q_i - p_i = p_j - q_j
\end{equation*}
for all $i\neq j$. Then, we have
\begin{equation*}
 \# \left(S_n(\pi)\backslash S_{n}^{*}(\pi)\right) = o\left(n^{1+\frac{k}{2}}\right).
\end{equation*}
\label{snstar}
\end{lemma}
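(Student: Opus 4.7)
My plan is to parametrize each $\pi$-consistent sequence by its starting point and its vector of increments, and then to split the count according to the relative signs chosen at the $k/2$ matched pairs. The bound $o(n^{1+k/2})$ will follow because in every ``bad'' sign pattern the cyclic closure condition collapses to one nontrivial linear equation, which removes a factor of $n$ from the naive count.

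Set $y_i := q_i - p_i$. The consistency $q_i = p_{i+1}$ (with $q_k = p_1$) shows that any $(P_1,\ldots,P_k) \in \mathcal{T}_n(k)$ is determined by $p_1 \in \{1,\ldots,n\}$ together with increments $y_1,\ldots,y_k \in \{-(n-1),\ldots,n-1\}$, subject to the cyclic closure $y_1 + \cdots + y_k = 0$ and the range restriction $p_1 + y_1 + \cdots + y_j \in \{1,\ldots,n\}$ for every $j$. In this parametrization, $\pi$-consistency becomes $|y_i| = |y_j| \iff i \sim_\pi j$, so for each block $B = \{i, j\}$ of $\pi$ (with $i<j$) there are exactly two options, $y_j = -y_i$ (the ``$-$'' sign for $B$) or $y_j = y_i$ (the ``$+$'' sign). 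The set $S_n^*(\pi)$ is precisely the sequences whose sign pattern is $(-,-,\ldots,-)$, so $S_n(\pi) \setminus S_n^*(\pi)$ is contained in the union, over sign patterns $\epsilon \in \{+,-\}^{k/2}$ with at least one $+$, of the sequences realizing $\epsilon$.

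Fix such a bad $\epsilon$ with $m \geq 1$ many ``$+$'' blocks, and pick one representative $i(B)$ per block. Within block $B$ the two increments sum to $0$ if $\epsilon_B = -$ and to $2y_{i(B)}$ if $\epsilon_B = +$, so the closure condition collapses to the single linear equation
\begin{equation*}
\sum_{B \,:\, \epsilon_B = +} y_{i(B)} = 0.
\end{equation*}
Dropping the range restriction for an upper bound, this equation admits at most $(2n-1)^{m-1}$ integer solutions (only $1$ when $m=1$, namely $y_{i(B)} = 0$); combined with $n$ choices for $p_1$ and at most $(2n-1)^{k/2-m}$ choices for the $-$-block representatives, the number of sequences realizing $\epsilon$ is bounded by $n \cdot (2n-1)^{k/2-1}$, which is $O(n^{k/2})$ with a constant depending only on $k$.

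Summing over the at most $2^{k/2}$ bad sign patterns and absorbing the innocuous overlap that occurs when some $y_{i(B)}$ vanishes (in which case the $+$ and $-$ choices coincide) gives $\#\bigl(S_n(\pi) \setminus S_n^*(\pi)\bigr) = O(n^{k/2}) = o(n^{1+k/2})$, as claimed. The main technical nuisance is making the ``one linear equation removes one degree of freedom'' step airtight, in particular treating cleanly the degenerate case $m = 1$, where the constraint forces a representative to be $0$ rather than expressing one representative as a linear combination of the others; the range restrictions on the partial sums only tighten the bound and so may be safely dropped.
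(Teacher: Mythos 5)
Your proof is correct and follows essentially the same route as the paper's: both parametrize a sequence by $p_1$ and the increments $q_i-p_i$, and both exploit the cyclic closure $\sum_{i=1}^k (q_i-p_i)=0$ to determine one increment for free whenever some block carries equal rather than opposite increments, yielding the bound $O(n^{k/2})=o(n^{1+k/2})$. The only cosmetic difference is that you organize the count by sign patterns over all $k/2$ blocks, whereas the paper marks a single ``positive'' pair and solves the closure equation for its common difference.
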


\begin{proof}
We call a pair $(P_i,P_j)$ with $i\sim_\pi j$, $i\neq j$, \emph{positive} if $q_i-p_i = q_j - p_j > 0$ and \emph{negative} if $q_i - p_i = q_j - p_j < 0$. Since $\sum_{i=1}^k q_i - p_i = 0$ by consistency, the existence of a negative pair implies the existence of a positive one. Thus, we can assume that any $(P_1,\ldots,P_k) \in S_n(\pi)\backslash S_{n}^{*}(\pi)$ contains a positive pair $(P_l,P_m)$. To fix such a sequence, we first determine the positions of $l$ and $m$, and then fix the signs of the remaining differences $q_i - p_i$. The number of possibilities to accomplish this depends only on $k$ and not on $n$. Now we choose one of $n$ possible values for $p_l$, and continue with assigning values to the differences $|q_i - p_i|$ for all $P_i$ except for $P_l$ and $P_m$. Since $\pi$ is a pair partition, we have at most $n^{\frac{k}{2}-1}$ possibilities for that. Then, $\sum_{i=1}^k q_i - p_i = 0$ implies that
\begin{equation*}
 0 < 2(q_l - p_l) = q_l - p_l + q_m - p_m = \sum_{\substack{i\in\{1,\ldots,k\}, \\ i\neq l,m}} p_i - q_i.
\end{equation*}
Since we have already chosen the signs of the differences $|q_i-p_i|$, $i\neq l,m$, as well as their absolute values, we know the value of the sum on the right hand side. Hence, the difference $q_l - p_l = q_m - p_m$ is fixed. We now have the index $p_l$, all differences $\left|q_i - p_i\right|, i\in\left\{1,\ldots,k\right\}$, and their signs. Thus, we can start at $P_l$ and go systematically through the whole sequence $(P_1,\ldots,P_k)$ to see that it is uniquely determined. Consequently, our considerations lead to
\begin{equation*}
 \# \left(S_n(\pi)\backslash S_{n}^{*}(\pi)\right) \leq C n^{\frac{k}{2}} = o\left(n^{1+\frac{k}{2}}\right).
\end{equation*}
\end{proof}

A consequence of Lemma~\ref{snstar} and relation \eqref{holder} is the identity
\begin{equation}
\frac{1}{n} \E\left[\tr\left(\X_{n}^{k}\right)\right]
= \frac{1}{n^{\frac{k}{2}+1}} \sum_{\pi\in\mathcal{PP}(k)} \sum_{\left(P_1,\ldots,P_k\right)\in S_{n}^{*}(\pi)} \E\left[a_n(P_1)\cdots a_n(P_k)\right] + o(1).
\label{altid}
\end{equation}

As already mentioned, the sets $S_n^*(\pi)$ help us to deal with the set $\nc$ of non-crossing pair partitions.

\begin{lemma}
 Let $\pi \in \nc$. For any $\left(P_1,\ldots,P_k\right)\in S_{n}^{*}(\pi)$, we have
\begin{equation*}
 \E\left[a_n(P_1)\cdots a_n(P_k)\right] = 1.
\end{equation*}
\label{noncrlemma}
\end{lemma}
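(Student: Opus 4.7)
The plan is to exploit the structure of non-crossing pair partitions: such partitions always contain an \emph{adjacent} pair $\{i,i+1\}$, and one can reduce $\pi$ to a smaller non-crossing pair partition by removing this pair. I would then proceed by induction on $k$.

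The key observation is that the $S_n^*$-condition forces an adjacent pair to correspond to a single matrix entry (counted twice). Suppose $\{i,i+1\}\in\pi$ and $(P_1,\ldots,P_k)\in S_n^*(\pi)$. By consistency we have $q_i=p_{i+1}$, and by the $S_n^*$-condition $q_i-p_i = p_{i+1}-q_{i+1}$. Combining these gives $q_{i+1}=p_i$, so $P_{i+1}=(q_i,p_i)$, which is merely $P_i$ with indices swapped. Since $\X_n$ is symmetric, this means
\begin{equation*}
a_n(P_i)\, a_n(P_{i+1}) = a_n(P_i)^2.
\end{equation*}

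Next, I would separate this factor from the rest of the product using condition (C2). Because $\pi$ is a pair partition, the index $i$ (and hence $i+1$) lies in its own block, distinct from the blocks of the remaining indices $j\in\{1,\ldots,k\}\setminus\{i,i+1\}$. By the $\pi$-consistency defining $S_n(\pi)$, the distance $|p_i-q_i|$ is then different from every $|p_j-q_j|$ with $j\neq i,i+1$. Thus $a_n(P_i)$ sits on a diagonal distinct from all other $a_n(P_j)$, and condition (C2) gives independence of $a_n(P_i)^2$ from the remaining product. Using (C1), $\E[a_n(P_i)^2]=1$, so
\begin{equation*}
\E[a_n(P_1)\cdots a_n(P_k)] = \E\Big[\prod_{j\neq i,i+1} a_n(P_j)\Big].
\end{equation*}

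The inductive step then requires checking that the reduced tuple $(P_1,\ldots,P_{i-1},P_{i+2},\ldots,P_k)$ lies in $S_n^*(\pi')$, where $\pi'$ is the non-crossing pair partition on $\{1,\ldots,k\}\setminus\{i,i+1\}$ obtained by deleting $\{i,i+1\}$ (and which is again non-crossing). Cyclic consistency of the reduced sequence follows because $q_{i-1}=p_i=q_{i+1}=p_{i+2}$, so the chain re-closes correctly. The $\pi'$-consistency of distances and the $S_n^*$ sign condition for pairs inside $\pi'$ are inherited from $\pi$. The base case $k=0$ is trivial, and the induction then yields the claim.

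The main obstacle is really just bookkeeping: verifying cleanly that after removing the adjacent pair the reduced tuple still satisfies all the defining properties of $S_n^*(\pi')$, in particular that cyclic consistency is preserved. Once this is in place, the whole argument collapses to the combinatorial fact that every non-crossing pair partition can be peeled off one adjacent pair at a time, combined with the unit variance and diagonal-independence hypotheses.
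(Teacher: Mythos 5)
Your proposal is correct and uses essentially the same mechanism as the paper: an adjacent block $\{i,i+1\}$ of a non-crossing pair partition forces $q_{i+1}=p_i$ under the $S_n^*$-condition, so $a_n(P_{i+1})=a_n(P_i)$, the pair can be deleted while preserving consistency, and independence across blocks via (C2) together with unit variance gives the value $1$. The only cosmetic difference is that you factor out one adjacent pair at a time by induction on $k$, whereas the paper first peels inside each block $\{l,m\}$ to conclude $a_n(P_l)=a_n(P_m)$ for every block and then factorizes the whole expectation in a single step.
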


\begin{proof}
 Let $l<m$ with $l\sim_\pi m$. Since $\pi$ is non-crossing, the number $l-m-1$ of elements between $l$ and $m$ must be even. In particular, there is $l\leq i< j\leq m$ with $i\sim_\pi j$ and $j=i+1$. By the properties of $S_{n}^{*}(\pi)$, we have $a_n(P_i)=a_n(P_j)$, and the sequence $\left(P_1,\ldots, P_l,\ldots,P_{i-1},P_{i+2},\ldots,P_m,\ldots,P_k\right)$ is still consistent. Applying this argument successively, all pairs between $l$ and $m$ vanish and we see that the sequence $\left(P_1,\ldots,P_l,P_m,\ldots,P_k\right)$ is consistent, that is $q_l=p_m$. Then, the identity $p_l=q_m$ also holds. In particular, $a_n(P_l)=a_n(P_m)$. Since this argument applies for arbitrary $l\sim_\pi m$, we obtain
\begin{equation*}
 \E\left[a_n(P_1)\cdots a_n(P_k)\right] = \prod_{\substack{l< m, \\ l\sim_\pi m}} \E\left[a_n(P_l) a_n(P_m)\right] = 1.
\end{equation*}
\end{proof}

By Lemma~\ref{noncrlemma}, we can conclude that
\begin{equation*}
\frac{1}{n^{\frac{k}{2}+1}} \sum_{\pi\in\nc} \sum_{\left(P_1,\ldots,P_k\right)\in S_{n}^{*}(\pi)} \E\left[a_n(P_1)\cdots a_n(P_k)\right]
= \frac{1}{n^{\frac{k}{2}+1}} \sum_{\pi\in\nc} \# S_n^*(\pi).
\end{equation*}

The following lemma allows us to finally calculate the term on the right hand side.

\begin{lemma}
 For any $\pi\in\nc$, we have
\begin{equation*}
 \lim_{n\to\infty} \frac{\# S_{n}^{*}(\pi)}{n^{\frac{k}{2}+1}} = 1.
\end{equation*}
\label{le2}
\end{lemma}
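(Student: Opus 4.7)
The plan is to proceed by induction on the (even) size $k$ of the underlying set. For the base case $k=2$, we have $\pi=\{\{1,2\}\}$ and any $(P_1,P_2)\in S_n^*(\pi)$ is of the form $((p,q),(q,p))$ with $p,q\in\{1,\ldots,n\}$ arbitrary, since the $S_n^*$-condition $q_1-p_1=-(q_2-p_2)$ is automatic from cyclic consistency. Hence $\# S_n^*(\pi)=n^2=n^{k/2+1}$ exactly, and the claim holds.

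For the inductive step, I exploit the structural fact that every non-crossing pair partition $\pi\in\nc$ with $k\geq 4$ contains a consecutive pair $\{j,j+1\}$: taking any block $\{a,b\}\in\pi$ minimizing $b-a$, the non-crossing property forces $b=a+1$ by a short minimality argument. Given such a pair and a sequence $(P_1,\ldots,P_k)\in S_n^*(\pi)$, the defining relation $q_j-p_j=-(q_{j+1}-p_{j+1})$ together with $q_j=p_{j+1}$ and $q_{j+1}=p_{j+2}$ forces $p_{j+2}=p_j$. Consequently, deleting $P_j$ and $P_{j+1}$ preserves cyclic consistency, and the relabeled truncated tuple $(P_1,\ldots,P_{j-1},P_{j+2},\ldots,P_k)$ lies in $S_n^*(\pi')$, where $\pi'\in\mathcal{N}\mathcal{P}\mathcal{P}(k-2)$ is obtained from $\pi$ by removing the arch $\{j,j+1\}$.

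Conversely, from any element of $S_n^*(\pi')$ one recovers elements of $S_n^*(\pi)$ by inserting $P_j=(p_j,p_j+\xi)$ and $P_{j+1}=(p_j+\xi,p_j)$; the value $p_j$ is already fixed by the truncated sequence (as $q_{j-1}$), while $\xi\in\mathbb{Z}$ is free subject only to $1\leq p_j+\xi\leq n$ (exactly $n$ options) and $|\xi|\neq |q'_i-p'_i|$ for each of the $k/2-1$ other blocks of $\pi'$ (which rules out at most $k-2$ further values of $\xi$). This near-bijection yields the two-sided bound
$$(n-k)\,\# S_n^*(\pi')\;\leq\;\# S_n^*(\pi)\;\leq\;n\,\# S_n^*(\pi').$$
Dividing through by $n^{k/2+1}$ and applying the inductive hypothesis $\# S_n^*(\pi')/n^{k/2}\to 1$ delivers the desired limit.

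The main obstacle is less conceptual than bookkeeping: one must verify that the truncated sequence inherits both the $\pi'$-consistency relation ($|q'_i-p'_i|=|q'_l-p'_l|\Leftrightarrow i\sim_{\pi'}l$) and the $S^*$-property from the original sequence, and that $p_j$ is genuinely determined by the truncated data when $k\geq 4$. Once these points are checked, the $O(1)$ loss from the arithmetic constraints on $\xi$ is plainly negligible at the scale $n^{k/2+1}$, and the induction closes cleanly with the exact constant $1$ in the limit.
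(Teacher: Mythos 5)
Your proposal is correct and follows essentially the same route as the paper: induction on $k$, locating a nearest-neighbour block $\{j,j+1\}$ in the non-crossing partition, deleting $P_j,P_{j+1}$ to land in $S_n^*(\pi')$ for $\pi'\in\mathcal{NPP}(k-2)$, and counting the $n-O(1)$ admissible choices of the freed middle index. Your two-sided sandwich bound is just a slightly more explicit version of the paper's identity $\#S_n^*(\pi)/n^{k/2+1}=\#S_n^*(\pi')/n^{k/2}+o(1)$.
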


\begin{proof}
Since $\pi$ is non-crossing, we can find a nearest neighbor pair $i\sim_\pi i+1$. Now fix $(P_1,\ldots,P_k)\in S_n^*(\pi)$, and write $P_l=(p_l,p_{l+1})$, $l=1,\ldots,k$, where $k+1$ is identified with $1$. Then the properties of $S_n^*(\pi)$ ensure that $(p_i,p_{i+1})=(p_{i+2},p_{i+1})$. Hence, we can eliminate the pairs $P_i,P_{i+1}$ to obtain a sequence $(P_1^{(1)},\ldots,P_{k-2}^{(1)}):=(P_1,\ldots,P_{i-1},P_{i+2},\ldots,P_k)$ which is still consistent. Denote by $\pi'$ the partition obtained from $\pi$ by deleting the block $\{i,i+1\}$, and relabeling any $l\geq i+2$ to $l-2$. Since $\pi$ is non-crossing, we have $\pi'\in\mathcal{NPP}(k-2)$. Moreover, $(P_1^{(1)},\ldots,P_{k-2}^{(1)})\in S_n^*(\pi')$. Thus we see that any $(P_1,\ldots,P_k)\in S_n^*(\pi)$ can be reconstructed from a tuple $(P_1^{(1)},\ldots,P_{k-2}^{(1)})\in S_n^*(\pi')$ and a choice of $p_{i+1}$. The latter admits $n-\frac{k-2}{2}$ possibilities since $\{i,i+1\}$ forms a block on its own in $\pi$. Consequently,
\begin{equation}
\frac{\# S_{n}^{*}(\pi)}{n^{\frac{k}{2}+1}} = \frac{\# S_{n}^{*}(\pi')}{n^{\frac{k}{2}}} + o(1).
\label{induct}
\end{equation}

Now if $k=2$, we get $S_{n}^{*}(\pi)=\{((p,q),(q,p)): p,q\in\{1,\ldots,n\}\}$, implying $\frac{\# S_{n}^{*}(\pi)}{n^2}=1$. For arbitrary even $k\in\N$, the statement of Lemma \ref{le2} follows then by induction using the identity in \eqref{induct}.
\end{proof}

Taking account of the relation $\#\nc=C_{\frac{k}{2}}$, we now arrive at
\begin{multline}
\frac{1}{n} \E\left[\tr\left(\X_{n}^{k}\right)\right] \\
= C_{\frac{k}{2}} + \frac{1}{n^{\frac{k}{2}+1}} \sum_{\pi\in\cc} \sum_{\left(P_1,\ldots,P_k\right)\in S_n^*(\pi)} \E\left[a_n(P_1)\cdots a_n(P_k)\right] + o(1),
\label{sum3}
\end{multline}

with $\cc$ being the set of all crossing pair partitions of $\{1,\ldots,k\}$. Since we consider only pair partitions, we know that the expectation on the right hand side is of the form
\begin{equation*}
\E\left[a_n(p_1,q_1)a_n(p_1+\tau_1,q_1+\tau_1)\right]\cdots\E\left[a_n(p_r,q_r)a_n(p_r+\tau_r,q_r+\tau_r)\right],
\end{equation*}

for $r:=\frac{k}{2}$ and some choices of $p_1,q_1,\tau_1,\ldots,p_r,q_r,\tau_r\in\N$. In order to calculate this expectation, assumption (C3) indicates that we only need to distinguish for any $i=1,\ldots,k$, whether we have $\tau_i=0$ or not. In the first case, we get the identity $\E\left[a_n(p_i,q_i)a_n(p_i+\tau_i,q_i+\tau_i)\right] = 1$, in the second we can conclude that $\E\left[a_n(p_i,q_i)a_n(p_i+\tau_i,q_i+\tau_i)\right] = c_n$. Fix some pair partition $\pi\in\mathcal{PP}(k)$, and take $(P_1,\ldots,P_{k})\in S_{n}^{*}(\pi)$. Motivated by these considerations, we put
\begin{align*}
m\left(P_1,\ldots,P_k\right) := \#\{1\leq i<j\leq k:a_n(P_i)=a_n(P_j)\}.
\end{align*}

Obviously, we have $0\leq m\left(P_1,\ldots,P_k\right)\leq \frac{k}{2}$. With this notation, we find that
\begin{align}
\frac{1}{n^{\frac{k}{2}+1}} \sum_{\left(P_1,\ldots,P_{k}\right)\in S_{n}^{*}(\pi)} \E\left[a_n(P_1)\cdots a_n(P_{k})\right]
= \frac{1}{n^{\frac{k}{2}+1}} \sum_{l=0}^{k/2} c_n^{\frac{k}{2}-l} \# A_n^{(l)}\left(\pi\right),
\label{anl}
\end{align}

where
\begin{equation*}
A_n^{(l)}\left(\pi\right):=\{ \left(P_1,\ldots,P_k\right)\in S_{n}^{*}(\pi): m\left(P_1,\ldots,P_k\right) = l \}.
\end{equation*}

The following lemma states that if a pair $P_i,P_j$ contributes to $m(P_1,\ldots,P_k)$, then we can assume that the block $\{i,j\}$ in $\pi$ is not crossed by any other block.

\begin{lemma}
Let $\pi\in\mathcal{PP}(k)$ and fix $i\sim_\pi j$, $i<j$. Define
\begin{equation*}
S_n^*(\pi;i,j):= \{\left(P_1,\ldots,P_{k}\right)\in S_{n}^{*}(\pi): P_i=(p_i,q_i), P_j=(p_j,q_j), p_i=q_j, q_i=p_j\}.
\end{equation*}
Assume that there is some $i'\sim_\pi j'$ such that $i<i'<j$, and either $j'<i$ or $j<j'$. Then,
\begin{equation*}
\# S_n^*(\pi;i,j) = o\left(n^{\frac{k}{2}+1}\right).
\end{equation*}
\label{crenc}
\end{lemma}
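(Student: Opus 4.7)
The plan is to encode each element $(P_1,\ldots,P_k)\in S_n^*(\pi)$ as a closed lattice walk on $\{1,\ldots,n\}$. Writing $P_l=(p_l,p_{l+1})$ with the cyclic convention $p_{k+1}=p_1$, and introducing step increments $\delta_l:=p_{l+1}-p_l=q_l-p_l$, the defining property of $S_n^*(\pi)$ forces $\delta_a=-\delta_c$ whenever $\{a,c\}\in\pi$. Thus such a walk is parametrized by $p_1$ together with one value $\delta_b$ per block $b$ of $\pi$, giving $k/2+1$ free parameters each of order $n$, which reproduces the standard bound $\#S_n^*(\pi)\le Cn^{k/2+1}$. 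The lemma will follow once I show that the extra condition defining $S_n^*(\pi;i,j)$ kills one of these degrees of freedom.

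The condition $p_i=q_j$, $q_i=p_j$ is equivalent (given $\delta_i=-\delta_j$) to $p_i=p_{j+1}$, which reads
\begin{equation*}
\sum_{l=i}^{j}\delta_l=0.
\end{equation*}
I then regroup this sum by blocks of $\pi$. The block $\{i,j\}$ contributes $\delta_i+\delta_j=0$. Any other block $\{a,c\}$ with both endpoints in $\{i+1,\ldots,j-1\}$ contributes $\delta_a+\delta_c=0$, and any block with both endpoints outside $[i,j]$ contributes nothing to the sum at all. The only surviving terms come from blocks that cross $\{i,j\}$, namely blocks with exactly one endpoint in $\{i+1,\ldots,j-1\}$; each such block $b$ contributes a single term $\epsilon_b\delta_b$ with $\epsilon_b\in\{-1,+1\}$. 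The constraint therefore reduces to a linear form $\sum_{b\text{ crosses }\{i,j\}}\epsilon_b\delta_b=0$ in the block step values.

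The crossing hypothesis on $\{i',j'\}$ is exactly what guarantees this linear form is nontrivial: the coefficient of $\delta_{\{i',j'\}}$ is $\pm 1\ne 0$, so the constraint determines $\delta_{\{i',j'\}}$ as an explicit linear function of the remaining step values. Hence every sequence in $S_n^*(\pi;i,j)$ is parametrized by $p_1$ and by the $k/2-1$ step values distinct from $\delta_{\{i',j'\}}$, yielding $\#S_n^*(\pi;i,j)\le Cn^{k/2}=o(n^{k/2+1})$. The only minor technicality is the cyclic identification $p_{k+1}=p_1$: when $j=k$ the sum should be read cyclically, but using $\sum_{l=1}^{k}\delta_l=0$ one may equivalently impose $\sum_{l=1}^{i-1}\delta_l=0$, and the block-grouping argument applies verbatim. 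I expect no serious obstacle beyond this bookkeeping; the conceptual heart of the proof is the block-cancellation observation together with the role of the crossing hypothesis in ensuring the linear constraint is not vacuous.
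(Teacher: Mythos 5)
Your proof is correct, and it takes a genuinely different (and in my view cleaner) route than the paper's. The paper argues by direct sequential enumeration: it first fixes $P_i=(p_i,q_i)$ and $P_j=(q_i,p_i)$ (at most $n^2$ choices), then assigns the pairs strictly between $i$ and $j$ working inwards from both ends, spending a factor of $n$ only when a block of $\pi$ is first encountered; since $P_{i'}$ is reached from both sides, it is forced by consistency, so the block $\{i',j'\}$ never receives a free choice and the total is $n^2\cdot n^{k/2-2}=n^{k/2}$. You instead parametrize $S_n^*(\pi)$ by $p_1$ and one increment per block, observe that the closure condition is the single telescoping identity $\sum_{l=i}^{j}\delta_l=0$, and group that sum by blocks so that only the blocks crossing $\{i,j\}$ survive; the hypothesis on $\{i',j'\}$ guarantees the resulting linear form has a coefficient $\pm1$ and hence eliminates one of the $k/2+1$ free parameters. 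Both arguments kill the same degree of freedom (the one attached to the crossing block), but yours makes the role of the crossing hypothesis completely transparent: without a crossing block the constraint $\sum_b\epsilon_b\delta_b=0$ is vacuous, which is consistent with the fact that the blocks counted by the height $h(\pi)$ (nearest neighbours, or blocks whose interior restricts to a pair partition) genuinely contribute to the main term. Your treatment of the cyclic case $j=k$ via $\sum_{l=1}^{k}\delta_l=0$ is fine. Two minor points worth making explicit in a final write-up: the increments $\delta_b$ range over roughly $2n$ values rather than $n$, so the bound is $Cn^{k/2}$ with $C$ depending on $k$ (harmless for the $o(n^{k/2+1})$ conclusion), and the sign $\delta_a=-\delta_c$ for $a<c$ in a block is what makes the parametrization by one value per block injective, so it deserves a sentence.
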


\begin{proof}
To fix some $\left(P_1,\ldots,P_{k}\right)\in S_n^*(\pi;i,j)$, we first choose a value for $p_i=q_j$ and $q_i=p_j$. This allows for at most $n^2$ possibilities. Hence, $P_i$ and $P_j$ are fixed. Now consider the pairs $P_{i+1},\ldots,P_{i'-1}$. $p_{i+1}$ is uniquely determined by consistency. For $q_{i+1}$, there are at most $n$ choices. Then, $p_{i+2}=q_{i+1}$. If $i+2\sim_\pi i+1$, we have one choice for $q_{i+2}$. Otherwise, there are at most $n$. Proceeding in the same way, we see that we have $n$ possibilities whenever we start a new equivalence class. Similarly, we can assign values to the pairs $P_{j},\ldots,P_{i'+1}$ in this order. Now $P_{i'}$ is determined by consistency. When fixing $P_{i-1},\ldots,P_1,P_k,\ldots,P_{j+1}$, we again have $n$ choices for any new equivalence class. To sum up, we are left with at most
\begin{equation*}
n^2 n^{\frac{k}{2}-2} = n^{\frac{k}{2}}
\end{equation*}
possible values for an element in $S_n^*(\pi;i,j)$.
\end{proof}

Recall Definition \ref{height} where we introduced the notion of the \emph{height} $h(\pi)$ of a pair partition $\pi$. Lemma \ref{crenc} in particular implies that only those $\left(P_1,\ldots,P_k\right)\in S_n^*(\pi)$ with
\begin{equation*}
0 \leq m\left(P_1,\ldots,P_k\right) \leq h(\pi)
\end{equation*}

contribute to the limit of \eqref{anl}. Indeed, if $m(P_1,\ldots,P_k) > h(\pi)$, we can find some $i\sim_\pi j$, $i<j$, such that $(P_1,\ldots,P_k)\in S_n^*(\pi;i,j)$ and neither $j=i+1$ nor is the restriction of $\pi$ to $\{i+1,\ldots,j-1\}$ a pair partition. Hence, the crossing property in Lemma~\ref{crenc} is satisfied, and $(P_1,\ldots,P_k)$ is contained in a set that is negligible in the limit. The identity in \eqref{anl} thus becomes
\begin{equation*}
\frac{1}{n^{\frac{k}{2}+1}} \sum_{\left(P_1,\ldots,P_{k}\right)\in S_{n}^{*}(\pi)} \E\left[a_n(P_1)\cdots a_n(P_{k})\right]
= \frac{1}{n^{\frac{k}{2}+1}} \sum_{l=0}^{h(\pi)} c_n^{\frac{k}{2}-l} \# B_n^{(l)}(\pi) + o(1),
\end{equation*}

where
\begin{multline*}
B_n^{(l)}(\pi):=\left\{ \left(P_1,\ldots,P_k\right)\in S_{n}^{*}(\pi): m\left(P_1,\ldots,P_k\right) = l;\right. \\
\left. a_n(P_i)=a_n(P_j), i<j \ \Rightarrow \ j=i+1 \ \text{or} \ \pi|_{\{i+1,\ldots,j-1\}} \ \text{is a pair partition} \right\}.
\end{multline*}

In the next step, we want to simplify the expression above further by showing that $B_n^{(l)}(\pi)=\emptyset$ whenever $0\leq l<h(\pi)$. This is ensured by

\begin{lemma}
Let $\pi\in\mathcal{PP}(k)$. For any $(P_1,\ldots,P_k)\in S_{n}^{*}(\pi)$, we have
\begin{equation*}
m(P_1,\ldots,P_k)\geq h(\pi).
\end{equation*}
\label{hpi}
\end{lemma}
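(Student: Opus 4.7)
The plan is to exhibit, for each of the $h(\pi)$ special pairs $\{i,j\}$ counted by the height, a coincidence $a_n(P_i) = a_n(P_j)$. Since distinct blocks of $\pi$ produce distinct index pairs, this immediately gives $m(P_1,\ldots,P_k) \geq h(\pi)$. So the whole game reduces to the following local claim: if $i\sim_\pi j$, $i<j$, and either $j = i+1$ or $\pi|_{\{i+1,\ldots,j-1\}}$ is a pair partition, then $a_n(P_i) = a_n(P_j)$.

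The case $j = i+1$ is a direct computation. Consistency forces $p_{i+1} = q_i$, and the defining property of $S_n^*(\pi)$ forces $q_i - p_i = p_{i+1} - q_{i+1} = q_i - q_{i+1}$, so $q_{i+1} = p_i$. Hence $P_{i+1} = (q_i, p_i)$, and by symmetry of $\X_n$ we conclude $a_n(P_{i+1}) = a_n(P_i)$.

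For the case that $\pi|_{\{i+1,\ldots,j-1\}}$ is a pair partition, the idea is a telescoping argument. Using consistency $p_{l+1} = q_l$, the sum collapses:
\begin{equation*}
\sum_{l=i+1}^{j-1} (q_l - p_l) \;=\; q_{j-1} - p_{i+1} \;=\; p_j - q_i.
\end{equation*}
On the other hand, because every index in $\{i+1,\ldots,j-1\}$ is $\pi$-paired with another index in the same range, the sum splits into contributions over each block $\{l, l'\}$ with $l,l' \in \{i+1,\ldots,j-1\}$, and the $S_n^*$-condition $q_l - p_l = p_{l'} - q_{l'}$ kills each such contribution. Thus $p_j = q_i$, and combining with $q_i - p_i = p_j - q_j$ (again from $S_n^*$) gives $q_j = p_i$. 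So $P_j = (q_i, p_i)$, and symmetry again yields $a_n(P_i) = a_n(P_j)$.

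Finally, the $h(\pi)$ pairs $\{i,j\}$ singled out by Definition~\ref{height} belong to $h(\pi)$ distinct blocks of $\pi$, so they give $h(\pi)$ distinct ordered pairs $i < j$ with $a_n(P_i) = a_n(P_j)$, proving $m(P_1,\ldots,P_k) \geq h(\pi)$. The only subtlety I anticipate is keeping track of the bookkeeping when showing the telescoping sum really does vanish over an arbitrary (possibly crossing) pair partition on $\{i+1,\ldots,j-1\}$; but since every block contributes $0$ independently via the $S_n^*$-identity, no non-crossing hypothesis is needed there.
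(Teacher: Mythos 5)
Your proof is correct and follows essentially the same strategy as the paper: for each block $\{i,j\}$ counted by $h(\pi)$ you show $p_j=q_i$ and $q_j=p_i$, so that $P_i$ and $P_j$ address the same matrix entry. The only difference is cosmetic — the paper derives $p_j=q_i$ by sequentially substituting through the system of $S_n^*$-equations, whereas you sum the block identities and telescope, which is the same cancellation organized more compactly.
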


\begin{proof}
If $h(\pi)=0$, there is nothing to prove. Thus, suppose that $h(\pi)\geq 1$ and take some $i\sim_\pi j$, $i<j$, such that either $j=i+1$ or $j-i-1\geq 2$ is even and the restriction of $\pi$ to $\{i+1,\ldots,j-1\}$ is a pair partition. Fix $(P_1,\ldots,P_k)\in S_n^*(\pi)$, and write $P_l=(p_l,p_{l+1})$ for any $l=1,\ldots,k$. We need to verify that $p_{i+1}=p_j$. If we achieve this, the definition of $S_n^*(\pi)$ will also ensure that $p_i=p_{j+1}$. As a consequence, the $\pi$-block $\{i,j\}$ will contribute to $m(P_1,\ldots,P_k)$. Since there are $h(\pi)$ such blocks, we will obtain $m(P_1,\ldots,P_k)\geq h(\pi)$ for any choice of $(P_1,\ldots,P_k)\in S_n^*(\pi)$.

If $j=i+1$, we immediately obtain $p_{i+1}=p_j$. To show this property in the second case, note that the sequence $(P_{i+1},\ldots,P_{j-1})$ solves the following system of equations:
\begin{align*}
p_{i+2}-p_{i+1}+p_{l_1+1}-p_{l_1} &= 0, \quad \text{if} \ i+1\sim_\pi l_1, \\
p_{i+3}-p_{i+2}+p_{l_2+1}-p_{l_2} &= 0, \quad \text{if} \ i+2\sim_\pi l_2, \\
& \vdots \\
p_{i+m+1}-p_{i+m}+p_{l_m+1}-p_{l_m} &= 0, \quad \text{if} \ i+m\sim_\pi l_m, \\
& \vdots \\
p_{j}-p_{j-1}+p_{l_{j-i-1}+1}-p_{l_{j-i-1}} &= 0, \quad \text{if} \ j-1\sim_\pi l_{j-i-1}.
\end{align*}

Start with solving the first equation for $p_{i+2}$ which yields
\begin{equation*}
p_{i+2}=p_{i+1}-p_{l_1+1}+p_{l_1}.
\end{equation*}

Then, insert this in the second equation, and solve it for $p_{i+3}$ to obtain
\begin{equation*}
p_{i+3}=p_{i+1}-p_{l_1+1}+p_{l_1}-p_{l_2+1}+p_{l_2}.
\end{equation*}

In the $j-i-1$-th step, we substitute $p_{j-1}=p_{i+(j-i-1)}$ in the $j-i-1$-th equation, and solve it for $p_j=p_{i+(j-i-1)+1}$. We then have
\begin{equation*}
p_j=p_{i+1}-\sum_{m=1}^{j-i-1} (p_{l_m+1}-p_{l_m}).
\end{equation*}

Since the restriction of $\pi$ to $\{i+1,\ldots,j-1\}$ is a pair partition, we can conclude that the sets $\{l_1,\ldots,l_{j-i-1}\}$ and $\{i+1,\ldots,j-1\}$ are equal. Hence, we obtain $\sum_{m=1}^{j-i-1} (p_{l_m+1}-p_{l_m})=p_j-p_{i+1}$, implying $p_j=p_{i+1}$.

\end{proof}

With the help of Lemma \ref{hpi}, we thus arrive at
\begin{equation*}
\frac{1}{n^{\frac{k}{2}+1}} \sum_{(P_1,\ldots,P_{k})\in S_{n}^{*}(\pi)} \E\left[a_n(P_1)\cdots a_n(P_k)\right]
= \frac{\# B_n^{(h(\pi))}(\pi)}{n^{\frac{k}{2}+1}} \ c_n^{\frac{k}{2}-h(\pi)} + o(1).
\end{equation*}

Note that any element $(P_1,\ldots,P_{k})\in S_{n}^{*}(\pi)$ satisfying the condition
\begin{equation}
a_n(P_i)=a_n(P_j), \ i<j \quad \Rightarrow \quad j=i+1 \ \text{or} \ \pi|_{\{i+1,\ldots,j-1\}} \ \text{is a pair partition},
\label{cond_a}
\end{equation}

fulfills the condition $m(P_1,\ldots,P_k)= h(\pi)$ as well. Indeed, \eqref{cond_a} guarantees that $m(P_1,\ldots,P_k)\leq h(\pi)$, and Lemma \ref{hpi} ensures that $m(P_1,\ldots,P_k)\geq h(\pi)$. Thus, we can write
\begin{multline*}
B_n^{(h(\pi))}(\pi) =\left\{ \left(P_1,\ldots,P_k\right)\in S_{n}^{*}(\pi): \right. \\
\left. a_n(P_i)=a_n(P_j), i<j \ \Rightarrow \ j=i+1 \ \text{or} \ \pi|_{\{i+1,\ldots,j-1\}} \ \text{is a pair partition} \right\}.
\end{multline*}

Now any element in the complement of $B_n^{(h(\pi))}(\pi)$ satisfies for some $i\sim_\pi j$ the crossing assumption in Lemma \ref{crenc}. This yields
\begin{equation*}
\frac{\#\left(B_n^{(h(\pi))}(\pi)\right)^c}{n^{\frac{k}{2}+1}} = o(1).
\end{equation*}

Since $B_n^{(h(\pi))}(\pi)\cup\left(B_n^{(h(\pi))}(\pi)\right)^c = S_{n}^{*}(\pi)$, we obtain that
\begin{equation}
\frac{1}{n^{\frac{k}{2}+1}} \sum_{\left(P_1,\ldots,P_{k}\right)\in S_{n}^{*}(\pi)} \E\left[a_n(P_1)\cdots a_n(P_{k})\right]
= \frac{\# S_n^*(\pi)}{n^{\frac{k}{2}+1}} \ c_n^{\frac{k}{2}-h(\pi)} + o(1).
\label{eqToepVol}
\end{equation}

To calculate the limit on the right-hand side, we have

\begin{lemma}[cf. \cite{brycdembo}, Lemma 4.6]
For any $\pi\in\mathcal{PP}(k)$, it holds that
\begin{equation*}
\lim_{n\to\infty} \frac{\#S_n^*(\pi)}{n^{\frac{k}{2}+1}} = p_T(\pi),
\end{equation*}

where $p_T(\pi)$ is the Toeplitz volume defined by solving the system of equations \eqref{eqSystem}.
\label{dembo}
\end{lemma}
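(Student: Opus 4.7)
The plan is to recognize $\#S_n^*(\pi)/n^{k/2+1}$ as a Riemann sum converging to the volume that defines $p_T(\pi)$.

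First, I would reparameterize. Any consistent $(P_1,\ldots,P_k)\in\mathcal{T}_n(k)$ corresponds to a closed walk $(p_1,\ldots,p_k,p_{k+1})$ in $\{1,\ldots,n\}$ with $p_{k+1}=p_1$, via $P_j=(p_j,p_{j+1})$. Setting $x_l:=p_{l+1}$ for $l=0,1,\ldots,k$, the defining relation of $S_n^*(\pi)$, namely $q_i-p_i=p_j-q_j$ whenever $i\sim_\pi j$, becomes $(x_i-x_{i-1})+(x_{l_i}-x_{l_i-1})=0$, which is exactly the system \eqref{eqSystem}. Summing the $k/2$ distinct equations telescopes to $x_k-x_0=0$, so the closure $p_{k+1}=p_1$ is automatic. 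Consequently, $\#S_n^*(\pi)$ equals the number of tuples $(x_0,\ldots,x_k)\in\{1,\ldots,n\}^{k+1}$ satisfying \eqref{eqSystem}.

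Second, I would solve the system for $x_{j_1},\ldots,x_{j_{k/2}}$ in terms of the $k/2+1$ free variables $x_0,x_{i_1},\ldots,x_{i_{k/2}}$. Rewriting the equation attached to block $\{i_l,j_l\}$ as $x_{j_l}=x_{j_l-1}+x_{i_l-1}-x_{i_l}$ and processing blocks in order of increasing $j_l$, the indices $i_l-1,\,i_l,\,j_l-1$ all lie strictly below $j_l$, so each $x_{j_l}$ is recursively expressed as an integer linear combination $L_l$ of the free variables. Therefore
\begin{equation*}
\frac{\#S_n^*(\pi)}{n^{k/2+1}}=\frac{1}{n^{k/2+1}}\sum_{(x_0,x_{i_1},\ldots,x_{i_{k/2}})\in\{1,\ldots,n\}^{k/2+1}}\prod_{l=1}^{k/2}\mathbbmss{1}\bigl\{L_l\in\{1,\ldots,n\}\bigr\}.
\end{equation*}
Substituting $y=x/n$ and exploiting the identity $L_l(x)/n=L_l(y)$ (valid since $L_l$ has integer coefficients and no constant term), the right-hand side is a Riemann sum for
\begin{equation*}
\int_{[0,1]^{k/2+1}}\prod_{l=1}^{k/2}\mathbbmss{1}\{L_l(y)\in[0,1]\}\,dy,
\end{equation*}
which by the construction in Section~\ref{measure} is precisely $p_T(\pi)$.

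The main technical point will be the convergence of this Riemann sum. The region carved out of $[0,1]^{k/2+1}$ by the constraints $0\leq L_l(y)\leq 1$ is a bounded polytope, so its boundary is a finite union of hyperplane pieces with Lebesgue measure zero; its indicator function is therefore Riemann integrable and the Riemann sum converges to the integral. Minor discrepancies between the lattice sum and the continuous integral (boundary lattice points, strict versus nonstrict inequalities, and the offset between $\{1,\ldots,n\}$ and $[0,n]$) contribute only $O(n^{k/2})$ terms and vanish after dividing by $n^{k/2+1}$. This boundary control, together with the triangular solvability of \eqref{eqSystem}, is the only nontrivial point; the remainder is a direct translation into the definition of the Toeplitz volume.
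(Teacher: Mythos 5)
Your overall route is the same as the paper's: identify $S_n^*(\pi)$ with lattice solutions of \eqref{eqSystem}, solve triangularly for the dependent variables in terms of the $k/2+1$ free ones, and pass to the limit by a Riemann-sum argument, using that the admissible region is a polytope whose boundary is Lebesgue-null. (The paper phrases the same limit probabilistically, as convergence in law of independent uniforms on $\{1/n,\ldots,1\}$ to uniforms on $[0,1]$; that is only a cosmetic difference.)

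There is, however, one genuine inaccuracy in your second step: the claim that $\#S_n^*(\pi)$ \emph{equals} the number of tuples $(x_0,\ldots,x_k)\in\{1,\ldots,n\}^{k+1}$ solving \eqref{eqSystem} is false. Membership in $S_n^*(\pi)\subseteq S_n(\pi)$ requires the full biconditional $|p_i-q_i|=|p_j-q_j|\Leftrightarrow i\sim_\pi j$, whereas the system \eqref{eqSystem} only encodes the forward implication. A lattice solution may accidentally produce equal distances across two different blocks of $\pi$, and then it belongs to $S_n(\eta)$ for some strictly coarser $\eta\in\mathcal{P}(k)$ rather than to $S_n(\pi)$: for $k=4$, $\pi=\{\{1,2\},\{3,4\}\}$ and $(p_1,\ldots,p_5)=(1,2,1,2,1)$ solves the system, yet all four distances coincide, so this tuple is not $\pi$-consistent. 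Hence the solution count is only an upper bound for $\#S_n^*(\pi)$. The repair is short and is exactly what the paper supplies: the surplus tuples lie in the union of the sets $S_n(\eta)$ over partitions $\eta$ refined by $\pi$ with $\#\eta<k/2$, and by \eqref{secondSn} each such set has cardinality $O(n^{\#\eta+1})=o(n^{\frac{k}{2}+1})$, so the discrepancy vanishes after dividing by $n^{\frac{k}{2}+1}$. With that one estimate inserted, the rest of your argument (telescoping to get the closure for free, integer linear forms without constant term, Riemann integrability of the polytope indicator) is sound and matches the paper's proof.
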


\begin{proof}
Fix $\pi\in\mathcal{PP}(k)$. Note that if $P=\{(p_i,p_{i+1}), i=1,\ldots,k\}\in S_n^*(\pi)$, then $x_0,x_1,\ldots,x_k$ with $x_i=p_{i+1}/n$ is a solution of the system of equations \eqref{eqSystem}. On the other hand, if $x_0,x_1,\ldots,x_k \in\{1/n,2/n,\ldots,1\}$ is a solution of \eqref{eqSystem} and $p_{i+1}=nx_i$, then either $\{(p_i,p_{i+1}), i=1,\ldots,k\}\in S_n^*(\pi)$ or $\{(p_i,p_{i+1}), i=1,\ldots,k\}\in S_n(\eta)$ for some partition $\eta\in\mathcal{P}(k)$ such that $i\sim_\pi j \Rightarrow i\sim_\eta j$, but $\#\eta<\#\pi$.

In \eqref{eqSystem}, we have $k+1$ variables and only $k/2$ equations. Denote the $k/2+1$ undetermined variables by $y_1,\ldots,y_{k/2+1}$. We thus need to assign values from the set $\{1/n,2/n,\ldots,1\}$ to $y_1,\ldots,y_{k/2+1}$, and then to calculate the remaining $k/2$ variables from the equations. Since the latter are also supposed to be in the range $\{1/n,2/n,\ldots,1\}$, it might happen that not all values for the undetermined variables are admissible. Let $p_n(\pi)$ denote the admissible fraction of the $n^{k/2+1}$ choices for $y_1,\ldots,y_{k/2+1}$. By our remark at the beginning of the proof and estimate \eqref{secondSn}, we have that
\begin{equation*}
\lim_{n\to\infty} \frac{\#S_n^*(\pi)}{n^{\frac{k}{2}+1}} = \lim_{n\to\infty} p_n(\pi),
\end{equation*}

if the limits exist. Now we can interpret $y_1,\ldots,y_{k/2+1}$ as independent random variables with a uniform distribution on $\{1/n,2/n,\ldots,1\}$. Then, $p_n(\pi)$ is the probability that the computed values stay within the interval $(0,1]$. As $n\to\infty$, $y_1,\ldots,y_{k/2+1}$ converge in law to independent random variables uniformly distributed on $[0,1]$. Hence, $p_n(\pi)\to p_T(\pi)$.
\end{proof}

Applying Lemma~\ref{dembo} and assumption (C4) to equation \eqref{eqToepVol}, we arrive at
\begin{equation*}
\lim_{n\to\infty}\frac{1}{n^{\frac{k}{2}+1}} \sum_{\left(P_1,\ldots,P_{k}\right)\in S_{n}^{*}(\pi)} \E\left[a_n(P_1)\cdots a_n(P_{k})\right]
= p_T(\pi) c^{\frac{k}{2}-h(\pi)}.
\end{equation*}

Substituting this result in \eqref{sum3}, we find that for any even $k\in\N$,
\begin{align*}
\lim_{n\to\infty} \frac{1}{n} \E\left[\tr\left(\X_n^k\right)\right]
= C_{\frac{k}{2}} + \sum_{\pi\in\cc} p_T(\pi) c^{\frac{k}{2}-h(\pi)}.
\end{align*}

To obtain the alternative expression in \eqref{nu_c} for the even moments of the limiting measure $\nu_c$, note that the considerations above were not restricted to crossing partitions. In particular, we can start from identity \eqref{altid} instead of \eqref{sum3} to see that
\begin{align*}
\lim_{n\to\infty} \frac{1}{n} \E\left[\tr\left(\X_n^k\right)\right]
= \lim_{n\to\infty} \sum_{\pi\in\mathcal{PP}(k)} \frac{\#S_n^*(\pi)}{n^{\frac{k}{2}+1}} \ c_n^{\frac{k}{2}-h(\pi)}
= \sum_{\pi\in\mathcal{PP}(k)} p_T(\pi) c^{\frac{k}{2}-h(\pi)}.
\end{align*}

\subsection{Almost Sure Convergence}

The almost sure convergence of the empirical distribution is a consequence of the following concentration inequality proven in \cite{brycdembo} and \cite{Diag}.

\begin{lemma}
Suppose that conditions (C1) and (C2) hold. Then, for any $k,n \in\N$,
\begin{equation*}
 \E\left[\left(\tr\left(\X_{n}^{k}\right) - \E\left[\tr \left(\X_{n}^{k}\right)\right]\right)^4\right] \leq C \ n^{2}.
\end{equation*}
\label{lemma}
\end{lemma}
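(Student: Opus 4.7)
The plan is to bound the fourth moment of the centered trace by expanding it as a sum over quadruples of consistent closed walks and exploiting centering together with the independence of diagonals from (C2).

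\textbf{Expansion.} Writing $\tilde a_n(P) := a_n(P_1)\cdots a_n(P_k) - \E[a_n(P_1)\cdots a_n(P_k)]$ for $P = (P_1,\ldots,P_k) \in \mathcal{T}_n(k)$, I have
\begin{equation*}
\E\Big[(\tr(\X_n^k) - \E[\tr(\X_n^k)])^4\Big]
= \frac{1}{n^{2k}} \sum_{P^{(1)},\ldots,P^{(4)} \in \mathcal{T}_n(k)} \E\Big[\prod_{i=1}^4 \tilde a_n(P^{(i)})\Big].
\end{equation*}
Expanding each centered factor and applying H\"older's inequality together with the uniform moment bound (C1), exactly as in \eqref{holder}, gives a uniform bound $|\E[\prod_{i=1}^4 \tilde a_n(P^{(i)})]| \leq C_k$ with $C_k$ depending only on $k$.

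\textbf{Vanishing of disconnected configurations.} Define the graph $G = G(P^{(1)},\ldots,P^{(4)})$ on $\{1,2,3,4\}$ by joining $i$ to $j$ whenever the walks $P^{(i)}$ and $P^{(j)}$ share at least one diagonal, i.e.\ there exist $u,v$ with $|p^{(i)}_u - q^{(i)}_u| = |p^{(j)}_v - q^{(j)}_v|$. By (C2), walks living on disjoint diagonal sets are stochastically independent. Hence if some vertex $i$ is isolated in $G$, then $\tilde a_n(P^{(i)})$ is independent of the remaining three factors; since $\E[\tilde a_n(P^{(i)})] = 0$, the joint expectation vanishes. Only quadruples whose diagonal graph $G$ has no isolated vertex can contribute.

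\textbf{Counting.} The main combinatorial task is to show that the number of contributing quadruples is $O(n^{2k+2})$. I group quadruples by the partition $\sigma$ of the $4k$ indices induced by the "same diagonal" equivalence, and count the $\sigma$-consistent quadruples by sequentially assigning values to the pairs: whenever I begin a new equivalence class I have at most $n$ choices, and otherwise at most $2$, in the spirit of \eqref{secondSn}. A naive count treating the four walks as independent would give $O(n^{2k+4})$ for quadruples whose $\sigma$ has no singletons; but the no-isolated-vertex constraint on $G$ forces at least two equivalence classes of $\sigma$ to be shared across distinct walks, and each such cross-sharing identifies the "new class" contribution of two walks into one, removing two powers of $n$. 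This yields the desired $O(n^{2k+2})$ estimate; combined with the prefactor $n^{-2k}$ and the uniform bound $C_k$ from the first step, the lemma follows.

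The principal obstacle is the combinatorial book-keeping in the final step: tracking how the diagonal-linkage constraint from Step 2 reduces the naive $n^{2k+4}$ count by exactly two powers of $n$. This is essentially the four-walk analogue of the single-walk Wigner count (and of the argument proving \eqref{secondSn}), and is the reason the result is cited from \cite{brycdembo} and \cite{Diag} rather than reproved here.
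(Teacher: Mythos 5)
The paper itself does not prove this lemma but cites \cite{brycdembo} and \cite{Diag}, and your overall strategy (expand the fourth central moment over quadruples of closed walks, use centering together with (C2) to discard quadruples whose diagonal-sharing graph $G$ has an isolated vertex, then count) is the route taken there. Your expansion, the H\"older bound, and the vanishing argument for isolated vertices are all correct. The counting step, however, has a genuine gap. Your sequential count gives $\#\{\sigma\text{-consistent quadruples}\}\leq C\,n^{4+\#\sigma}$ ($n^4$ for the four starting vertices, $n$ per new class, $2$ per repeat), and the cross-walk sharing is already priced into $\#\sigma$: a class shared between two walks is counted as ``new'' only once in this bound, so the sharing does not lower the exponent any further. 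Since a singleton-free $\sigma$ can be a pair partition of the $4k$ positions with $\#\sigma=2k$ while $G$ has no isolated vertex (for instance, two blocks each joining walk $1$ to walk $2$, two joining walk $3$ to walk $4$, all remaining blocks within a single walk), your argument only yields $O(n^{2k+4})$, i.e.\ a bound of $O(n^4)$ on the fourth moment. Moreover, the claim that no isolated vertex forces \emph{two} cross-walk classes is itself false: a single block meeting all four walks already makes $G$ complete.

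The ingredient you are missing is the cyclic consistency of each walk, $\sum_{u=1}^k\bigl(q_u^{(i)}-p_u^{(i)}\bigr)=0$, which your ``$n$ per new class'' count ignores. In the configuration above the shared diagonal classes are hit exactly once inside walk $1$, so walk $1$'s closing relation becomes a nontrivial linear constraint on the otherwise free diagonal values and removes one power of $n$; the component $\{3,4\}$ contributes another, recovering $O(n^{2k+2})$. In general one must combine these closing constraints with the sign analysis of Lemma~\ref{snstar} and a parity count of how often each class meets each walk; this case analysis is precisely the content of the proofs in \cite{brycdembo} (Lemma 4.7) and \cite{Diag}, so deferring to them is legitimate, but the mechanism you sketch in their place does not deliver the two missing powers of $n$. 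A further small point: you restrict to partitions $\sigma$ without singletons, but this does not follow from the no-isolated-vertex condition on $G$; it needs the separate (easy) observation that a globally unmatched diagonal entry is independent of all the others and has mean zero, so its presence makes the expectation vanish.
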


From Lemma~\ref{lemma} and Chebyshev's inequality, we can now conclude that for any $\varepsilon>0$ and any $k,n\in\N$,
\begin{equation*}
\P\left( \left| \frac{1}{n} \tr\left(\X_{n}^{k}\right) - \E \left[\frac{1}{n}\tr\left(\X_{n}^{k}\right)\right] \right| > \varepsilon \right) \leq \frac{C}{\varepsilon^4 n^2}.
\end{equation*}

Applying the Borel-Cantelli lemma, we see that
\begin{equation}
\frac{1}{n} \tr\left(\X_{n}^{k}\right) - \E\left[\frac{1}{n}\tr\left(\X_{n}^{k}\right)\right] \to 0, \quad \text{a.s.}.
\label{aslim}
\end{equation}

Let $Y$ be a random variable distributed according to $\nu_c$. The convergence of the moments of the expected empirical distributions and relation \eqref{aslim} yield
\begin{equation*}
\frac{1}{n} \tr\left(\X_{n}^{k}\right) \to \E[Y^k], \quad \text{a.s.}.
\end{equation*}

Since the distribution of $Y$ is uniquely determined by its moments, we obtain almost sure weak convergence of the empirical spectral distribution of $\X_n$ to $\nu_c$.

\bibliographystyle{alpha}
\bibliography{gabi}

\begin{thebibliography}{AGZ10}

\bibitem[AGZ10]{agz}
Greg~W. Anderson, Alice Guionnet, and Ofer Zeitouni.
\newblock {\em An Introduction to Random Matrices}.
\newblock Cambridge studies in advanced mathematics 118. Cambridge University
  Press, Cambridge, 2010.

\bibitem[Arn71]{Arnold}
Ludwig Arnold.
\newblock On {W}igner's semicircle law for the eigenvalues of random matrices.
\newblock {\em Z. Wahrscheinlichkeitstheorie und Verw. Gebiete}, 19:191--198,
  1971.

\bibitem[BDJ06]{brycdembo}
W{\l}odzimierz Bryc, Amir Dembo, and Tiefeng Jiang.
\newblock Spectral measure of large random {H}ankel, {M}arkov and {T}oeplitz
  matrices.
\newblock {\em Ann. Probab.}, 34(1):1--38, 2006.

\bibitem[Ell85]{Ellis}
Richard~S. Ellis.
\newblock {\em Entropy, large deviations, and statistical mechanics}, volume
  271 of {\em Grundlehren der Mathematischen Wissenschaften [Fundamental
  Principles of Mathematical Sciences]}.
\newblock Springer-Verlag, New York, 1985.

\bibitem[EN78]{ellis2}
Richard Ellis and Charles Newman.
\newblock Fluctuationes in {C}urie-{W}eiss exemplis.
\newblock In {\em Mathematical Problems in Theoretical Physics}, volume~80 of
  {\em Lecture Notes in Physics}, pages 313--324. Springer Berlin / Heidelberg,
  1978.

\bibitem[Erd11]{erdoes_survey}
L{\'a}szl{\'o} Erd{\H{o}}s.
\newblock Universality of {W}igner random matrices: a survey of recent results.
\newblock {\em Uspekhi Mat. Nauk}, 66(3(399)):67--198, 2011.

\bibitem[ESY09]{ESY}
L{\'a}szl{\'o} Erd{\H{o}}s, Benjamin Schlein, and Horng-Tzer Yau.
\newblock Local semicircle law and complete delocalization for {W}igner random
  matrices.
\newblock {\em Comm. Math. Phys.}, 287(2):641--655, 2009.

\bibitem[FL11]{Diag}
Olga Friesen and Matthias L\"owe.
\newblock The semicircle law for matrices with independent diagonals.
\newblock {\em Preprint, to appear in J. Theoret. Probab.}, 2011.

\bibitem[Gui09]{alice_stflour}
Alice Guionnet.
\newblock {\em Large random matrices: lectures on macroscopic asymptotics},
  volume 1957 of {\em Lecture Notes in Mathematics}.
\newblock Springer-Verlag, Berlin, 2009.
\newblock Lectures from the 36th Probability Summer School held in Saint-Flour,
  2006.

\bibitem[SSB05]{Schenker_Schulz-Baldes}
Jeffrey Schenker and Hermann Schulz-Baldes.
\newblock Semicircle law and freeness for random matrices with symmetries or
  correlations.
\newblock {\em Math. Res. Lett.}, 12:531--542, 2005.

\bibitem[TV11]{taovu2}
Terence Tao and Van Vu.
\newblock Random matrices: Universality of local eigenvalue statistics.
\newblock {\em Acta Mathematica}, 206:127--204, 2011.

\bibitem[Wig58]{Wigner}
Eugene~P. Wigner.
\newblock On the distribution of the roots of certain symmetric matrices.
\newblock {\em Ann. of Math.}, 67:325--328, 1958.

\bibitem[Wis28]{Wishart28}
John Wishart.
\newblock The generalized product moment distribution in samples from a normal
  multivariate population.
\newblock {\em Biometrika}, 20:32--52, 1928.

\end{thebibliography}

\end{document}